\newtheorem{theorem}{Theorem}[section]
\newtheorem{lemma}[theorem]{Lemma}
\theoremstyle{definition}
\newtheorem{definition}[theorem]{Definition}
\newtheorem{Open Problem}[theorem]{Open Problem}
\newtheorem{conjecture}[theorem]{Conjecture}
\theoremstyle{remark}
\newtheorem{remark}[theorem]{Remark}
\numberwithin{equation}{section}
\begin{document}
\setcounter{page}{1}

\title[ Oscillating  singular integral operators on graded groups revisited]{  Oscillating  singular integral operators on graded Lie groups revisited   }

\author[D. Cardona]{Duv\'an Cardona}
\address{
  Duv\'an Cardona:
  \endgraf
  Department of Mathematics: Analysis, Logic and Discrete Mathematics
  \endgraf
  Ghent University, Belgium
  \endgraf
  {\it E-mail address} {\rm duvanc306@gmail.com, duvan.cardonasanchez@ugent.be}
  }
  
\author[M. Ruzhansky]{Michael Ruzhansky}
\address{
  Michael Ruzhansky:
  \endgraf
  Department of Mathematics: Analysis, Logic and Discrete Mathematics
  \endgraf
  Ghent University, Belgium
  \endgraf
 and
  \endgraf
  School of Mathematical Sciences
  \endgraf
  Queen Mary University of London
  \endgraf
  United Kingdom
  \endgraf
  {\it E-mail address} {\rm michael.ruzhansky@ugent.be, m.ruzhansky@qmul.ac.uk}
  }

\thanks{The authors were supported  by the FWO  Odysseus  1  grant  G.0H94.18N:  Analysis  and  Partial Differential Equations and  by the Methusalem programme of the Ghent University Special Research Fund (BOF)
(Grant number 01M01021). Duv\'an Cardona was supported by the Research Foundation-Flanders
(FWO) under the postdoctoral
grant No 1204824N.  Michael Ruzhansky is also supported  by EPSRC grant 
EP/R003025/2.
}

     \keywords{Calder\'on-Zygmund operator, Weak (1,1) inequality, Oscillating singular integrals}
     \subjclass[2010]{35S30, 42B20; Secondary 42B37, 42B35}

\begin{abstract} In this work, we extend the Euclidean theory of oscillating singular integrals due to Fefferman and Stein in \cite{Fefferman1970,FeffermanStein1972} to arbitrary graded Lie groups. Our approach reveals the strong compatibility between the geometric measure theory of a graded Lie group and the Fourier analysis associated with Rockland operators. Our criteria are presented in terms of the oscillating Fefferman condition of the kernel of the operator and its group Fourier transform. One of the novelties of this work is that we use the infinitesimal representation of a Rockland operator to measure the decay of the Fourier transform of the kernel. 
\end{abstract} 

\maketitle

\tableofcontents
\allowdisplaybreaks

\section{Introduction}
\subsection{Outline}
This work is part of a series of papers \cite{MathZ1,MathZ2,CR20221} where the boundedness of oscillating integral operators on graded Lie groups and on compact Lie groups has been investigated. In \cite{CR20221}, we proved the $H^1$-$L^1$-boundedness of oscillating singular integrals on graded Lie groups, extending the analysis by Alexopoulos \cite{alexo} for the case of spectral multipliers of H\"ormander sub-Laplacians. For instance, in \cite{CR20221}, our criteria are presented according to the modern theory of PDE in terms of Rockland operators. Having proved the $H^1$-$L^1$ boundedness estimate in \cite{CR20221}, here we consider the problem of extending the weak $(1,1)$ estimate by Fefferman \cite{Fefferman1970} to graded Lie groups. For the regularity properties of more general classes of spectral multipliers (of non-oscillating type) and of pseudo-differential operators on graded Lie groups, we refer the reader to \cite{CardonaDelgadoRuzhansky}.

The results in \cite{CR20221}, together with the main result Theorem \ref{main:th} of this manuscript, extend the theory of oscillating singular integrals developed by Fefferman and Stein in \cite{Fefferman1970,FeffermanStein1972} to the setting of graded Lie groups. Our setting then covers a variety of groups of interest in analysis of PDE, namely, the Euclidean space, Heisenberg-type groups, stratified groups, etc. For our criteria, we will use the Fourier analysis associated with Rockland operators. These are hypoelliptic partial differential operators in view of the Helffer and Nourrigat solution of the Rockland conjecture (see \cite{HelfferNourrigat}), and they could be of arbitrary order. In the particular case of stratified Lie groups, they encompass the family of H\"ormander sub-Laplacians and their integer powers.

\subsection{Historical aspects}
We now turn to the historical aspects of the subject in the Euclidean setting that inspired our approach. As pointed out by Stein in \cite{Stein1998Notices}, there is probably no work in the last seventy years that has had as widespread an influence in analysis as the historic memoir \cite{CalderonZygmund1952}, {\it ``On the existence of certain singular integrals''} by Calder\'on and Zygmund, published in 1952 in {\it Acta Mathematica}. Using the methods of real interpolation (e.g., the Marcinkiewicz interpolation theorem), one of the main problems of the Calder\'on-Zygmund theory is to identify the sharp conditions on a distribution $K$ such that the convolution operator $f\mapsto f\ast K$ can be extended to a weak $(1,1)$-type operator, that is,
$$ \Vert f\ast K\Vert_{L^{1,\infty}(\mathbb{R}^n)}\leq C\Vert f\Vert_{L^1(\mathbb{R}^n)},\quad f\in C^\infty_0(\mathbb{R}^n). $$

For instance, singular integrals were important predecessors of so-called pseudo-differential operators, whose impact in analysis, partial differential equations, and differential geometry was recognized through the works of Kohn and Nirenberg, H\"ormander, Stein and Fefferman, and Atiyah and Singer, among others (see H\"ormander \cite[Page 178]{Hormander1985III} for historical details). In terms of convolution operators, a remarkable kernel class $H_{\infty}$ was introduced by H\"ormander, consisting of all distributions satisfying the kernel estimate
\begin{equation}\label{FeffCondINFTY}
[K]_{H_{\infty}} := \sup_{R>0}\left\Vert \smallint\limits_{|x|\geq 2R}|K(x-y)-K(x)|\,dx \right\Vert_{L^\infty(B(0,R),\,dy)} < \infty.
\end{equation}
Then, in \cite{Hormander1960}, H\"ormander proved that a convolution operator $Tf = K\ast f$ with a kernel satisfying the ``smoothness'' condition in \eqref{FeffCondINFTY} and bounded on $L^2(\mathbb{R}^n)$ (that is, the Fourier transform of the kernel $\widehat{K}\in L^\infty(\mathbb{R}^n)$) is of weak $(1,1)$-type.

The theory of oscillating singular integrals developed by Fefferman and Stein in \cite{Fefferman1970,FeffermanStein1972} extends in some aspects the work of Calder\'on and Zygmund by enabling the analysis of oscillating multipliers. Indeed, generalizing the H\"ormander condition, Fefferman \cite{Fefferman1970} and Fefferman and Stein \cite{FeffermanStein1972} considered distributions with compact support satisfying the condition
\begin{equation}\label{FeffCond}
[K]_{H_{\infty},\theta} := \sup_{0<R<1}\left\Vert \smallint\limits_{|x|\geq 2R^{1-\theta}}|K(x-y)-K(x)|\,dx \right\Vert_{L^\infty(B(0,R),\,dy)} < \infty.
\end{equation}

Roughly speaking, if $K$ satisfies \eqref{FeffCond} and its Fourier transform has the decay property
\begin{equation}\label{decay}
|\widehat{K}(\xi)| = O((1+|\xi|)^{-\frac{n\theta}{2}}),\quad 0 < \theta < 1,
\end{equation}
then Fefferman's theorem establishes that $T$ admits a bounded extension of weak $(1,1)$-type, provided the support of $K$ is sufficiently small. While the small support condition does not explicitly appear in \cite[Theorem 2']{Fefferman1970}, it is implicitly assumed in the proof given in \cite[Page 24]{Fefferman1970}. This assumption is not restrictive, as an analysis similar to that in \cite[Page 23]{Fefferman1970} enables the reduction from distributions $K$ with arbitrary support size to those with small support.   

Clearly, when $\theta = 0$, Fefferman's condition reduces to H\"ormander's condition for distributions $K$ with compact support. The boundedness of convolution operators $T$ with kernels satisfying the H\"ormander condition from the Hardy space $H^1(\mathbb{R}^n)$ to itself was established in \cite{FeffermanStein1972}.

From the perspective of harmonic analysis, our main Theorem \ref{main:th} provides a critical estimate for oscillating singular integrals on $L^p$-spaces at the endpoint $p=1$, specifically establishing their weak $(1,1)$-boundedness. This result reveals profound connections between Fourier analysis on graded Lie groups and their geometric measure theory through the lens of representation theory. A key technical tool enabling this work is the existence of hypoelliptic left-invariant linear partial differential operators on (graded) Lie groups, which follows from Helffer and Nourrigat's solution to the Rockland conjecture \cite{HelfferNourrigat}. 

A fundamental tool in proving our critical estimate is, as anticipated, the Calder\'on-Zygmund decomposition for spaces of homogeneous type developed by Coifman and Weiss in \cite{CoifmanWeiss}. The key innovation of this work lies in combining three essential components:
\begin{itemize}
    \item The Calder\'on-Zygmund decomposition of an integrable function $f = g + b = g + \sum_j b_j$ into its good part $g$ and bad parts $b_j$,
    \item The geometric properties of the supports of $b_j,$  and,
    \item the Fourier analysis associated with Rockland operators.
\end{itemize}
This synthesis is formally established in the proof of Lemma \ref{lemma:Fefferman}.

\subsection{The main result}
Let $T: X\rightarrow Y$ be a bounded operator between Banach spaces $X$ and $Y$, with operator norm denoted by $\|T\|_{\textnormal{op}}$. To present our main result, we first introduce the necessary notation:
\begin{itemize}
    \item $\widehat{G}$ denotes the unitary dual of $G$,
    \item $\mathcal{R} = \sum_{[\alpha]\leq \nu}a_\alpha X^\alpha$ is a positive Rockland operator on $G$, defined as a positive, left-invariant, hypoelliptic partial differential operator,
    \item $\pi(\mathcal{R}) = \widehat{k}_{\mathcal{R}}(\pi)$ represents the Fourier transform of its right-convolution kernel ${k}_{\mathcal{R}}$, characterized by the identity $\mathcal{R}f = f\ast {k}_{\mathcal{R}}$ for $f \in C^{\infty}_0(G)$.
\end{itemize}
The main theorem of this work is stated as follows:
\begin{theorem}\label{main:th}Let $G$ be a graded Lie group of homogeneous dimension $Q$, and let $T \colon C^\infty_0(G) \rightarrow \mathscr{D}'(G)$ be a left-invariant operator with right-convolution kernel $K \in L^1_{\textnormal{loc}}(G \setminus \{e\})$, that is $T$ is defined by $Tf = f \ast K$. Assume that:
\begin{enumerate}
    \item $K$ is a distribution with compact support and sufficiently small diameter, namely that, $\textnormal{diam}(\textnormal{supp}(K)) < 1.$
    \item For some $0 \leq \theta < 1$, $K$ satisfies the group Fourier transform conditions:
    \begin{equation}\label{LxiG}
        \sup_{\pi \in \widehat{G}} \| (1+\pi(\mathcal{R}))^{\frac{Q\theta}{2\nu}} \widehat{K}(\pi) \|_{\textnormal{op}} < \infty, \quad 
        \sup_{\pi \in \widehat{G}} \| \widehat{K}(\pi) (1+\pi(\mathcal{R}))^{\frac{Q\theta}{2\nu}} \|_{\textnormal{op}} < \infty;
    \end{equation}
    
    \item $K$ satisfies the oscillating H\"ormander condition:
    \begin{equation}\label{GS:CZ:cond}
        [K]_{H_{\infty,\theta}(G)} := \sup_{0<R<1} \sup_{|y|<R} \smallint\limits_{|x| \geq 2R^{1-\theta}} |K(y^{-1}x) - K(x)| \, dx < \infty.
    \end{equation}
\end{enumerate}
Then $T$ admits a bounded extension of weak $(1,1)$-type.
\end{theorem}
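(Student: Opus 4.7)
The plan is to adapt the Fefferman-Stein strategy to the graded group setting, combining the Coifman-Weiss Calder\'on-Zygmund decomposition on the space of homogeneous type $(G,|\cdot|,dx)$ with a scale-dependent splitting of the kernel $K$ provided by the Rockland functional calculus of $\mathcal{R}$. Given $\alpha>0$ and $f\in L^1(G)$, I would first apply the CZ decomposition at level $\alpha$, producing $f = g+\sum_j b_j$ with $\|g\|_\infty \lesssim \alpha$, $\int b_j=0$, $\operatorname{supp} b_j \subset I_j=B(x_j,r_j)$, $\|b_j\|_1 \lesssim \alpha |I_j|$, and $\sum_j |I_j|\lesssim \alpha^{-1}\|f\|_1$. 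Since $(1+\pi(\mathcal{R}))^{-Q\theta/(2\nu)}$ has operator norm at most $1$, condition \eqref{LxiG} forces $\sup_\pi \|\widehat{K}(\pi)\|_{\text{op}}<\infty$ and hence, by Plancherel on $\widehat{G}$, the $L^2$-boundedness of $T$. This disposes of the good part via $\|Tg\|_{2}^{2}\lesssim \|g\|_\infty\|g\|_1 \lesssim \alpha\|f\|_1$ and Chebyshev.

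For the bad part $b=\sum_j b_j$, the basic difficulty is that the natural exceptional set $\Omega^* = \bigcup_j B(x_j,2r_j^{1-\theta})$, dictated by the enlarged scale appearing in \eqref{GS:CZ:cond}, has measure $\sim \sum_j r_j^{Q(1-\theta)}$ which need not be bounded by $\alpha^{-1}\|f\|_1$ whenever many balls satisfy $r_j\ll 1$. This is circumvented by a scale-adapted splitting of the kernel itself, which is the content of Lemma \ref{lemma:Fefferman}. For each $I_j$ with $r_j<1$, use the spectral calculus of $\mathcal{R}$ to write $K=K_j^{\sharp}+K_j^{\flat}$, with group Fourier supports in $\{\pi(\mathcal{R})\lesssim r_j^{-\nu}\}$ and its complement, respectively. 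By construction and by \eqref{LxiG}, the high-frequency piece enjoys the $L^2$-gain $\|\widehat{K_j^{\flat}}(\pi)\|_{\text{op}}\lesssim r_j^{Q\theta/2}$, while the spectral localization of $K_j^{\sharp}$ combined with Hulanicki-type estimates for the heat kernel of $\mathcal{R}$ promotes the oscillating H\"ormander condition \eqref{GS:CZ:cond} into an integrated smoothness estimate for $K_j^{\sharp}$ outside $B(x_j,2r_j^{1-\theta})$.

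With this splitting in hand, the bad part splits into two parts. Outside $\Omega^*$, the vanishing mean of $b_j$ and the smoothness inherited by $K_j^{\sharp}$ give
\begin{equation*}
\sum_j \int_{G\setminus \Omega^*} |b_j*K_j^{\sharp}(x)|\,dx \lesssim \sum_j \|b_j\|_1 \lesssim \|f\|_1,
\end{equation*}
while the contribution of $K_j^{\flat}$ on the enlarged ball is controlled by Cauchy-Schwarz and the $L^2$-gain:
\begin{equation*}
\int_{B(x_j,2r_j^{1-\theta})}|b_j*K_j^{\flat}|\,dx \lesssim r_j^{Q(1-\theta)/2}\|b_j*K_j^{\flat}\|_2 \lesssim r_j^{Q(1-\theta)/2}\cdot r_j^{Q\theta/2}\|b_j\|_2 \lesssim |I_j|^{1/2}\|b_j\|_2.
\end{equation*}
Using $\|b_j\|_2^{2}\lesssim \alpha\|b_j\|_1$ and summing in $j$ via Cauchy-Schwarz with $\sum_j |I_j|\lesssim \alpha^{-1}\|f\|_1$ yields a total bound of $\|f\|_1$, and Chebyshev closes the bad part.

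The main obstacle is establishing Lemma \ref{lemma:Fefferman}: constructing the scale-adapted decomposition $K=K_j^{\sharp}+K_j^{\flat}$ through Rockland spectral multipliers and verifying that $K_j^{\sharp}$ genuinely satisfies an integrated H\"ormander-type smoothness estimate outside the enlarged ball $B(x_j,2r_j^{1-\theta})$, uniformly in $j$. This demands translating the operator-valued Fourier-side decay \eqref{LxiG} into kernel-side information via the Plancherel formula on $\widehat{G}$, together with heat semigroup estimates for $e^{-t\mathcal{R}}$ that respect the natural dilation structure of $G$ and the homogeneity of $\mathcal{R}$. The small-support hypothesis on $K$ ensures that only scales $r_j\lesssim 1$ are relevant, so the supremum over $0<R<1$ in \eqref{GS:CZ:cond} suffices to close the argument.
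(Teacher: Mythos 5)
Your plan takes a genuinely different route from the paper. The paper circumvents the ``$\Omega^*$ too large'' problem not by splitting the kernel $K$ in frequency, but by keeping the \emph{standard} exceptional set $I^*=\bigcup_j B(x_j,2r_j)$ and replacing each $b_j$ with a mollified copy $\tilde b_j=b_j*\phi_j$, where $\phi_j$ is supported at the \emph{smaller} radius $R_j\sim r_j^{1/(1-\theta)}$. After a change of variables, this matches the H\"ormander condition \eqref{GS:CZ:cond} exactly at scale $R=R_j$ (note $2R_j^{1-\theta}=\textnormal{diam}(I_j)$), so $\|T(b-\tilde b)\|_{L^1(G\setminus I^*)}\lesssim [K]_{H_{\infty,\theta}}\|f\|_1$ with no kernel surgery; for $\|T\tilde b\|_{L^2}$ the Fourier hypothesis \eqref{LxiG} is extracted by factoring through $(1+\mathcal R)^{-Q\theta/(2\nu)}$, and the paper's Lemma~\ref{lemma:Fefferman} is a decomposition of $(1+\mathcal R)^{-Q\theta/(2\nu)}\tilde b$ (not of $K$) into a near piece controlled by the bounded-overlap property and a far piece controlled in $L^1\cap L^\infty$.

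There is, however, a concrete gap in your argument that would block it as stated: the chain for the high-frequency piece passes through $\|b_j\|_{L^2}$ and invokes $\|b_j\|_{L^2}^2\lesssim\alpha\|b_j\|_{L^1}$. This would require $\|b_j\|_{L^\infty}\lesssim\alpha$, but the Calder\'on--Zygmund decomposition on a space of homogeneous type only gives $L^\infty$ control of the \emph{good} part $g$; the bad pieces $b_j=(f-\textnormal{avg}_{I_j}f)\mathbf 1_{I_j}$ satisfy $\|b_j\|_{L^1}\lesssim\alpha|I_j|$ and cancellation, but can blow up in $L^\infty$ whenever $f$ concentrates. This is precisely what the mollification $b_j\mapsto b_j*\phi_j$ in the paper is designed to repair: the replacement restores the missing $L^2$/$L^\infty$ control at the scale $R_j$ while the spatial change it induces is absorbed by \eqref{GS:CZ:cond}. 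Without some step of this kind, $\|b_j*K_j^\flat\|_{L^2}$ cannot be estimated. A secondary issue is that your two estimates do not together cover $Tb$ outside a fixed exceptional set: you bound $b_j*K_j^\sharp$ on $G\setminus\Omega^*$ and $b_j*K_j^\flat$ on the enlarged ball $B(x_j,2r_j^{1-\theta})$, leaving $b_j*K_j^\flat$ on $G\setminus B(x_j,2r_j^{1-\theta})$ unaddressed; relatedly, the transfer of the H\"ormander condition from $K$ to the spectrally truncated $K_j^\sharp$ is asserted but not substantiated, and it is not obvious that a sharp Rockland cutoff preserves \eqref{GS:CZ:cond} uniformly in $j$.
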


Now, we briefly discuss our results.

\begin{remark}
For $G = \mathbb{R}^n$, we recover the classical weak $(1,1)$ boundedness result proved by Fefferman in \cite[Theorem 2']{Fefferman1970}. From the proof of Theorem \ref{main:th} in Section \ref{proof:section}, one can estimate the operator norm of $T: L^1 \rightarrow L^{1,\infty}$ by
\[
\Vert T \Vert_{L^1 \rightarrow L^{1,\infty}} \lesssim \Vert T \Vert_{L^2 \rightarrow L^2} + [K]_{H_{\infty,\theta}} = \Vert \widehat{K} \Vert_{L^\infty} + [K]_{H_{\infty,\theta}},
\]
in view of the Plancherel theorem.
\end{remark}

\begin{remark}
The main contribution of Theorem \ref{main:th} is when $0 < \theta < 1$. For $\theta = 0$, the statement in Theorem \ref{main:th} follows from the general theory of Calder\'on-Zygmund operators developed by Coifman and Weiss in \cite{CoifmanWeiss}. H\"ormander-Mihlin criteria on graded groups were obtained by the second author and V. Fischer in \cite{FR}. For different classes of oscillating multipliers, we refer the reader to Bramati, Ciatti, Green, and Wright \cite{BramatietAl}, and to Ciatti and Wright \cite{CiattiWright2018} for the case of stratified Lie groups.
\end{remark}

\begin{remark}
The boundedness of an operator $T$ satisfying the hypotheses in Theorem \ref{main:th} from the Hardy space $H^1(G)$ into $L^1(G)$ has been proved by the authors in \cite{CR20221}, while the $H^1$-$L^1$ estimate for pseudo-differential operators on graded Lie groups was analyzed in \cite{CardonaDelgadoRuzhansky}. As for spectral multipliers, we refer the reader to the classical paper of Alexopoulos \cite{alexo}.
\end{remark}

\begin{remark}[Historical note]
Oscillating singular integrals arose as generalisations of oscillating Fourier multipliers. These are multipliers associated to symbols of the form
\begin{equation*}
    \widehat{K}(\xi) = \psi(\xi)\frac{e^{i|\xi|^a}}{|\xi|^{\frac{n\alpha}{2}}}, \quad \psi \in C^{\infty}(\mathbb{R}^n), \quad 0 < a < 1,
\end{equation*}
where $\psi$ vanishes near the origin and is equal to one for large $|\xi|$. It was proved by Wainger \cite{Wainger1965} that $K(x)$ is essentially equal to $c_n |x|^{-n-\lambda} e^{ic_n' |x|^{a'}}$, where
$\lambda = \frac{n(a-\alpha)}{2(1-a)}$ and $a' = \frac{a}{a-1}$. From this, one can deduce that $|\nabla K(x)| \lesssim |x|^{-n-\lambda-1+a'}$, from which it follows that for $a = \alpha = \theta$, the estimate \eqref{GS:CZ:cond} remains valid.

The $L^p$ properties for convolution operators of this form were first studied by Hardy \cite{Hardy1913}, Hirschman \cite{Hirschman1956}, and Wainger \cite{Wainger1965}. The sharp version of the $L^\infty$-BMO boundedness for oscillating Fourier multipliers can be deduced from the classical work of Fefferman \cite{Fefferman1973}. Further works on the subject in the setting of manifolds and beyond can be found in Seeger \cite{Seeger,Seeger1990,Seeger1991}, Seeger and Sogge \cite{SeegerSogge}, and for the setting of Fourier integral operators, we refer the reader to Seeger, Sogge, and Stein \cite{SSS} and Tao \cite{Tao}.
\end{remark}

\subsection{Open problems} In addition to our main result, in this paper we propose Conjecture \ref{Conjecture} and Open Problem \ref{OpenPro}, both related to a modern problem in harmonic analysis: namely, the determination of optimal ranges of boundedness results in terms of the topological dimension $n$ of the group, rather than its Hausdorff dimension. This concerns, in particular, the number or order of derivatives required of the symbol. For the relevant notions of graded Lie groups and Rockland operators, we refer the reader to Section \ref{preliminaries}.

\begin{conjecture}\label{Conjecture}
    Let $G$ be a graded Lie group of homogeneous dimension $Q$, and let $\mathcal{R}$ be a positive Rockland operator of homogeneous degree $\nu>0$. Then, the weak $(1,1)$ boundedness of the oscillating spectral multiplier
    \[
    e^{i (1+\mathcal{R})^{a/\nu}}(1+\mathcal{R})^{-m/\nu}, \quad m = \frac{Qa}{2}, \quad 0 < a < 1,
    \]
    can be deduced from the boundedness result in Theorem \ref{main:th}. Moreover, its $H^1$–$L^1$ boundedness can be shown by proving that it belongs to the H\"ormander class $S^{-m}_{1-a,0}(G \times \widehat{G})$ on the phase space $G \times \widehat{G}$, where the boundedness properties for these classes have been established in \cite{CardonaDelgadoRuzhansky}.
\end{conjecture}

\begin{Open Problem}\label{OpenPro}
The problem of improving the index $Q$—corresponding to the Hausdorff dimension of the group—in the weak $(1,1)$ estimate \eqref{LxiG} of Theorem \ref{main:th} by replacing it with the topological dimension $n$, or with another number $N > 0$ such that $n \leq N < Q$, remains open. Specifically, one may ask whether  conditions of the form
\begin{equation}\label{LxiG:2:Open}
  \sup_{\pi \in \widehat{G}} \left\| (1+\pi(\mathcal{R}))^{\frac{N\theta}{2\nu}} \widehat{K}(\pi) \right\|_{\textnormal{op}} < \infty,\quad \sup_{\pi \in \widehat{G}} \left\| \widehat{K}(\pi) (1+\pi(\mathcal{R}))^{\frac{N\theta}{2\nu}} \right\|_{\textnormal{op}} < \infty,
\end{equation}
together with the oscillating H\"ormander condition in \eqref{GS:CZ:cond}, suffices to guarantee the weak $(1,1)$ boundedness (or the $H^1$–$L^1$ boundedness) of the operator $T$.

This problem remains open even in the setting where $G$ is a general stratified Lie group and $\mathcal{R}$ is an arbitrary Rockland operator of any order. For the current state of the art when $\mathcal{R}$ is a second-order operator given by a H\"ormander sub-Laplacian $\mathcal{L} = -\sum_i X_i^2$, we refer the reader to the pioneering work of Alexopoulos \cite{alexo}, as well as to the more recent contributions by Martini, M\"uller, and Nicolussi Golo \cite{MartiniMullerNicolussi}, and the extensive references therein.
\end{Open Problem}

\section{Fourier analysis on graded groups}\label{preliminaries}

The notation and terminology of this paper on the analysis of homogeneous Lie groups are mainly taken from Folland and Stein \cite{FollandStein1982}. For the analysis of Rockland operators, we will follow \cite[Chapter 4]{FischerRuzhanskyBook}.

\subsection{Homogeneous and graded Lie groups} 
    Let $G$ be a homogeneous Lie group. This means that $G$ is a connected and simply connected Lie group whose Lie algebra $\mathfrak{g}$ is endowed with a family of dilations. We introduce it in the following definition.

\begin{definition}
A family of dilations $D_{r}^{\mathfrak{g}}$, $r > 0$, on the Lie algebra $\mathfrak{g}$ is a family of automorphisms on $\mathfrak{g}$ satisfying the following two conditions:
\begin{itemize}
\item For every $r > 0$, $D_{r}^{\mathfrak{g}}$ is a map of the form
$$ D_{r}^{\mathfrak{g}} = \textnormal{Exp}(\ln(r)A) $$
for some diagonalisable linear operator $A \equiv \textnormal{diag}[\nu_1, \cdots, \nu_n]$ on $\mathfrak{g}$.
\item For all $X, Y \in \mathfrak{g}$ and $r > 0$, 
$$ [D_{r}^{\mathfrak{g}}X, D_{r}^{\mathfrak{g}}Y] = D_{r}^{\mathfrak{g}}[X, Y]. $$ 
\end{itemize}
\end{definition}
\begin{remark}
We call the eigenvalues of $A$, $\nu_1, \nu_2, \dots, \nu_n$, the dilation weights or weights of $G$. 
\end{remark}

In our analysis, the notion of the homogeneous dimension of the group is crucial. We introduce it as follows.

\begin{definition}
The homogeneous dimension of a homogeneous Lie group $G$ is given by
$$ Q = \textnormal{\textbf{Tr}}(A) = \nu_1 + \cdots + \nu_n.  $$
\end{definition}

\begin{definition}[Dilations on the group]
The dilations $D_{r}^{\mathfrak{g}}$ of the Lie algebra $\mathfrak{g}$ induce a family of maps on $G$ defined via
$$ D_{r} := \exp_{G} \circ D_{r}^{\mathfrak{g}} \circ \exp_{G}^{-1}, \quad r > 0, $$
where $\exp_{G}: \mathfrak{g} \rightarrow G$ is the usual exponential map associated with the Lie group $G$. We refer to the family $D_{r}$, $r > 0$, as dilations on the group.
\end{definition}
 \begin{remark}
If we write $rx = D_{r}(x)$, $x \in G$, $r > 0$, then a relation on the homogeneous structure of $G$ and the Haar measure $dx$ on $G$ is given by
$$ \smallint\limits_{G}(f \circ D_{r})(x)\,dx = r^{-Q} \smallint\limits_{G} f(x)\,dx. $$
\end{remark}

\begin{remark}
A Lie group is graded if its Lie algebra $\mathfrak{g}$ may be decomposed as the sum of subspaces 
$$ \mathfrak{g} = \mathfrak{g}_{1} \oplus \mathfrak{g}_{2} \oplus \cdots \oplus \mathfrak{g}_{s}, $$ 
such that $[\mathfrak{g}_{i}, \mathfrak{g}_{j}] \subset \mathfrak{g}_{i+j}$, and $\mathfrak{g}_{i+j} = \{0\}$ if $i + j > s$.
\end{remark}

Examples of graded Lie groups are the Heisenberg group $\mathbb{H}^n$ and, more generally, any stratified groups where the Lie algebra $\mathfrak{g}$ is generated by $\mathfrak{g}_{1}$. Here, $n$ is the topological dimension of $G$, $n = n_{1} + \cdots + n_{s}$, where $n_{k} = \dim \mathfrak{g}_{k}$.

\begin{remark}
A Lie algebra admitting a family of dilations is nilpotent, and hence so is its associated
connected, simply connected Lie group. The converse does not hold, i.e., not every
nilpotent Lie group is homogeneous, although they exhaust a large class. See \cite{FischerRuzhanskyBook} for details. Indeed, the main class of Lie groups under our consideration is that of graded Lie groups.
\end{remark}

\begin{remark}
A graded Lie group $G$ is a homogeneous Lie group equipped with a family of weights $\nu_j$, all of them positive rational numbers. Let us observe that if $\nu_{i} = \frac{a_i}{b_i}$ with $a_i, b_i$ integers, and $b$ is the least common multiple of the $b_i$'s, the family of dilations 
$$ \mathbb{D}_{r}^{\mathfrak{g}} = \textnormal{Exp}(\ln(r^b)A): \mathfrak{g} \rightarrow \mathfrak{g}, $$
has integer weights, $\nu_{i} = \frac{a_i b}{b_i}$. Thus, in this paper, we always assume that the weights $\nu_j$, defining the family of dilations, are non-negative integers, which allows us to assume that the homogeneous dimension $Q$ is a non-negative integer. This is a natural context for the study of Rockland operators (see Remark 4.1.4 of \cite{FischerRuzhanskyBook}).
\end{remark}

\subsection{Fourier analysis on nilpotent Lie groups}

Let $G$ be a simply connected nilpotent Lie group. Then, the adjoint representation $\textnormal{ad}:\mathfrak{g}\rightarrow\textnormal{End}(\mathfrak{g})$ is nilpotent. Next, we define unitary and irreducible representations.

\begin{definition}
We say that $\pi$ is a continuous, unitary, and irreducible representation of $G$ if the following properties are satisfied:
\begin{itemize}
    \item $\pi\in \textnormal{Hom}(G, \textnormal{U}(H_{\pi}))$ for some separable Hilbert space $H_\pi$; that is, $\pi(xy)=\pi(x)\pi(y)$ and, for the adjoint of $\pi(x)$, we have $\pi(x)^*=\pi(x^{-1})$ for every $x,y\in G$.
    \item The map $(x,v)\mapsto \pi(x)v$ from $G\times H_\pi$ into $H_\pi$ is continuous.
    \item For every $x\in G$ and for every subspace $W_\pi\subset H_\pi$, if $\pi(x)W_{\pi}\subset W_{\pi}$, then $W_\pi=H_\pi$ or $W_\pi=\{0\}$.
\end{itemize}
\end{definition}

\begin{definition}[The unitary dual]
Let $\textnormal{Rep}(G)$ be the set of unitary, continuous, and irreducible representations of $G$. The relation
{\small{
\begin{equation*}
    \pi_1\sim \pi_2\textnormal{ if and only if there exists } A\in \mathscr{B}(H_{\pi_1},H_{\pi_2}) \textnormal{ such that } A\pi_{1}(x)A^{-1}=\pi_2(x),
\end{equation*}}}
for every $x\in G$, is an equivalence relation, and the unitary dual of $G$, denoted by $\widehat{G}$, is defined via
\[
    \widehat{G}:={\textnormal{Rep}(G)}/{\sim}.
\]
Let us denote by $d\pi$ the Plancherel measure on $\widehat{G}$.
\end{definition}

Next, we define the main object for our further analysis: the Fourier transform.

\begin{definition}[Group Fourier Transform]
The Fourier transform of $f\in \mathscr{S}(G)$ (meaning that $f\circ \textnormal{exp}_G\in \mathscr{S}(\mathfrak{g})$, with $\mathfrak{g}\simeq \mathbb{R}^{\dim(G)}$) at $\pi\in\widehat{G}$ is defined by
\begin{equation*}
    \widehat{f}(\pi)=\smallint\limits_{G}f(x)\pi(x)^*\,dx:H_\pi\rightarrow H_\pi, \quad \textnormal{and} \quad \mathscr{F}_{G}:\mathscr{S}(G)\rightarrow \mathscr{S}(\widehat{G}):=\mathscr{F}_{G}(\mathscr{S}(G)).
\end{equation*}
\end{definition}

\begin{remark}[Fourier Inversion Formula and Plancherel Theorem]
Identifying each representation \(\pi\) with its equivalence class \([\pi]=\{\pi' : \pi\sim \pi'\}\), for every \(\pi\in \widehat{G},\) the Kirillov trace character \(\Theta_\pi\) is defined by
\[
(\Theta_{\pi},f) := \textnormal{\textbf{Tr}}(\widehat{f}(\pi)),
\]
where \(\textnormal{\textbf{Tr}}\) denotes the trace of trace class operators on $H_\pi$. This defines \(\Theta_\pi\) as a tempered distribution on \(\mathscr{S}(G)\). 
In particular, the identity
\[
    f(e_G)=\smallint\limits_{\widehat{G}}(\Theta_{\pi},f)\,d\pi
\]
implies the Fourier inversion formula \(f=\mathscr{F}_G^{-1}(\widehat{f})\), where
\[
    (\mathscr{F}_G^{-1}\sigma)(x):=\smallint\limits_{\widehat{G}}\textnormal{\textbf{Tr}}(\pi(x)\sigma(\pi))\,d\pi, \quad x\in G,
\]
defines the inverse Fourier transform \(\mathscr{F}_G^{-1}:\mathscr{S}(\widehat{G})\to\mathscr{S}(G)\). In this context, the Plancherel theorem asserts that
\[
    \|f\|_{L^2(G)} = \|\widehat{f}\|_{L^2(\widehat{G})},
\]
where \(L^2(\widehat{G})\) is the Hilbert space defined by
\[
    L^2(\widehat{G}) := \smallint\limits_{\widehat{G}} H_\pi\otimes H_{\pi}^*\,d\pi,
\]
equipped with the norm
\[
    \|\sigma\|_{L^2(\widehat{G})} := \left( \smallint\limits_{\widehat{G}} \|\sigma(\pi)\|_{\textnormal{HS}}^2\,d\pi \right)^{\frac{1}{2}},
\]
and \(\|\sigma(\pi)\|_{\textnormal{HS}}\) denotes the Hilbert--Schmidt norm of \(\sigma(\pi)\).
\end{remark}

\subsection{Homogeneous linear operators and Rockland operators} There is a family of continuous linear operators that respect the action of the dilations of the group. These are called \emph{homogeneous linear operators}. We introduce them in the following definition.
\begin{definition}[Homogeneous operators]
A linear operator \(T:C^\infty(G)\rightarrow \mathscr{D}'(G)\) is said to be \emph{homogeneous of degree} \(\nu\in \mathbb{C}\) if, for every \(r>0\), the equality
\[
T(f\circ D_{r}) = r^{\nu}(Tf)\circ D_{r}
\]
holds for every \(f\in \mathscr{D}(G)\).
\end{definition}
Now, we introduce the main class of differential operators in the context of nilpotent Lie groups. The existence of these operators characterizes the family of graded Lie groups. We call them \emph{Rockland operators}.

\begin{definition}[Rockland operators]
Let \(\pi \in \widehat{G}\) be a unitary irreducible representation \(\pi:G\to U(H_\pi)\). We denote by \(H_{\pi}^{\infty}\) the space of smooth vectors, that is, the set of all \(v\in H_\pi\) such that the map \(x\mapsto \pi(x)v\), \(x\in G\), is smooth.

A \emph{Rockland operator} is a left-invariant differential operator \(\mathcal{R}\) which is homogeneous of positive degree \(\nu=\nu_{\mathcal{R}}>0\), and such that for every non-trivial unitary irreducible representation \(\pi\in \widehat{G}\), the operator \(\pi(\mathcal{R})\) is injective on \(H_{\pi}^{\infty}\). 

The map \(\sigma_{\mathcal{R}}(\pi)=\pi(\mathcal{R})\) is called the \emph{symbol} associated to \(\mathcal{R}\); it coincides with the infinitesimal representation of \(\mathcal{R}\) viewed as an element of the universal enveloping algebra.
\end{definition}
\begin{remark}
It can be shown that a Lie group \(G\) is graded if and only if there exists a differential Rockland operator on \(G\).
\end{remark}

Next, we record for our further analysis some aspects of the functional calculus for Rockland operators.

\begin{remark}[Functional calculus for Rockland operators]
If the Rockland operator \(\mathcal{R}\) is formally self-adjoint, then both \(\mathcal{R}\) and \(\pi(\mathcal{R})\) admit self-adjoint extensions on \(L^{2}(G)\) and \(H_{\pi}\), respectively. Preserving the same notation for their self-adjoint extensions, and denoting by \(E\) and \(E_{\pi}\) their corresponding spectral measures, we define the functional calculus by
\[
f(\mathcal{R}) = \smallint_{-\infty}^{\infty} f(\lambda) \, dE(\lambda), \quad \text{and} \quad \pi(f(\mathcal{R})) \equiv f(\pi(\mathcal{R})) = \smallint_{-\infty}^{\infty} f(\lambda) \, dE_{\pi}(\lambda).
\]
In general, we will reserve the notation \(\{dE_A(\lambda)\}_{0\leq \lambda <\infty}\) for the spectral measure associated with a positive and self-adjoint operator \(A\) on a Hilbert space \(H\).
\end{remark}
We now recall a lemma on dilations on the unitary dual \(\widehat{G}\), which will be useful in our analysis of spectral multipliers. For the proof, see Lemma 4.3 of \cite{FischerRuzhanskyBook}.

\begin{lemma}\label{dilationsrepre}
For every \(\pi\in \widehat{G}\) let us define
\begin{equation}\label{dilations:repre}
    D_{r}(\pi)(x) \equiv (r\cdot \pi)(x) := \pi(r\cdot x) \equiv \pi(D_r(x)),
\end{equation}
for every \(r>0\) and all \(x\in G\). Then, if \(f\in L^{\infty}(\mathbb{R})\), we have
\[
f(\pi^{(r)}(\mathcal{R})) = f(r^{\nu} \pi(\mathcal{R})).
\]
\end{lemma}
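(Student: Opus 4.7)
The plan is to reduce the functional-calculus identity to the operator identity $\pi^{(r)}(\mathcal{R}) = r^{\nu}\pi(\mathcal{R})$ on smooth vectors, and then transfer it through the spectral calculus.

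First, I would verify that $\pi^{(r)} := \pi \circ D_{r}$ is again a strongly continuous unitary representation of $G$: since $D_{r}$ is a Lie group automorphism of $G$ (because $D_{r}^{\mathfrak{g}}$ is an automorphism of $\mathfrak{g}$), the composition preserves the homomorphism property, unitarity, and continuity, and it is irreducible since $D_{r}$ is a bijection of $G$. Consequently $\pi^{(r)}$ has a well-defined infinitesimal representation on the space of smooth vectors, and one can check that $H_{\pi}^{\infty} = H_{\pi^{(r)}}^{\infty}$, since the smoothness of $x\mapsto \pi(x)v$ is equivalent to that of $x\mapsto \pi(D_{r}x)v$.

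Next I would compute the derived representation on a weight vector $X_{j}\in\mathfrak{g}$ with dilation weight $\nu_{j}$. Since $D_{r}\circ\exp_{G} = \exp_{G}\circ\, D_{r}^{\mathfrak{g}}$ and $D_{r}^{\mathfrak{g}}X_{j}= r^{\nu_{j}}X_{j}$, differentiating at $t=0$ yields
\begin{equation*}
d\pi^{(r)}(X_{j})v = \tfrac{d}{dt}\Big|_{t=0}\pi\bigl(\exp_{G}(tD_{r}^{\mathfrak{g}}X_{j})\bigr)v = r^{\nu_{j}}\, d\pi(X_{j})v,\qquad v\in H_{\pi}^{\infty}.
\end{equation*}
Extending $d\pi$ and $d\pi^{(r)}$ as algebra homomorphisms of the universal enveloping algebra $\mathcal{U}(\mathfrak{g})$, any monomial $X_{i_{1}}\cdots X_{i_{k}}$ that is homogeneous of total degree $\nu_{i_{1}}+\cdots+\nu_{i_{k}}$ is sent by $\pi^{(r)}$ to $r^{\nu_{i_{1}}+\cdots+\nu_{i_{k}}}$ times its image under $\pi$. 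Writing $\mathcal{R}\in\mathcal{U}(\mathfrak{g})$ as a sum of such monomials all of degree $\nu$ (which is possible precisely because $\mathcal{R}$ is homogeneous of degree $\nu$), I obtain the operator identity
\begin{equation*}
\pi^{(r)}(\mathcal{R})v = r^{\nu}\,\pi(\mathcal{R})v,\qquad v\in H_{\pi}^{\infty},
\end{equation*}
on a common dense core of smooth vectors.

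Finally, since $\mathcal{R}$ is a positive Rockland operator, both $\pi(\mathcal{R})$ and $\pi^{(r)}(\mathcal{R})$ admit unique self-adjoint extensions whose restrictions to $H_{\pi}^{\infty}$ are essentially self-adjoint, and these cores coincide. The operator identity above therefore passes to the closures, so $\pi^{(r)}(\mathcal{R})$ and $r^{\nu}\pi(\mathcal{R})$ agree as self-adjoint operators. By uniqueness of the bounded Borel functional calculus, this equality transfers to $f(\pi^{(r)}(\mathcal{R})) = f(r^{\nu}\pi(\mathcal{R}))$ for every $f\in L^{\infty}(\mathbb{R})$, which is the claim. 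The only delicate point is the bookkeeping between the derived Lie-algebra representation and the infinitesimal representation of an element of $\mathcal{U}(\mathfrak{g})$, ensuring that the homogeneity of $\mathcal{R}$ as a differential operator matches the scalar factor produced by $D_{r}^{\mathfrak{g}}$ acting on the relevant monomials; this is what forces the exponent to be exactly $r^{\nu}$.
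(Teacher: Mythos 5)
Your proof is correct, and it reproduces the standard argument that the paper itself defers to (the paper does not give a proof but cites Lemma~4.3 of \cite{FischerRuzhanskyBook}): establish $\pi^{(r)}(\mathcal{R}) = r^{\nu}\pi(\mathcal{R})$ on the common core $H_{\pi}^{\infty}$ by differentiating $\pi\circ D_{r}$ at the identity and using that a $\nu$-homogeneous left-invariant differential operator is a linear combination of monomials $X^{\alpha}$ with $[\alpha]=\nu$, then invoke essential self-adjointness of $\pi(\mathcal{R})$ on $H_{\pi}^{\infty}$ to pass to the closures and finally to the bounded Borel functional calculus. The only point you flag as delicate — matching the homogeneity of $\mathcal{R}$ as a differential operator with the scalar factor produced by $D_{r}^{\mathfrak{g}}$ on monomials of the enveloping algebra — is indeed exactly where the exponent $r^{\nu}$ is forced, and you handle it correctly.
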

\begin{remark}
For instance, for any \(\alpha\in \mathbb{N}_0^n\) and for an arbitrary family \(X_1, \dots, X_n\) of left-invariant vector fields, we will use the notation
\begin{equation}
    [\alpha] := \sum_{j=1}^{n} \nu_j \alpha_j,
\end{equation}
for the homogeneous degree of the operator \(X^{\alpha} := X_1^{\alpha_1} \cdots X_n^{\alpha_n}\), whose order is \(|\alpha| := \sum_{j=1}^{n} \alpha_j\).
\end{remark}
\begin{remark}\label{DilationLebesgue}
By considering the dilation \(r\cdot x = D_{r}(x)\), \(x\in G\), \(r>0\), the relation between the homogeneous structure of \(G\) and the Haar measure \(dx\) on \(G\) is given by (see \cite[Page 100]{FischerRuzhanskyBook})
\[
\smallint\limits_{G} (f\circ D_{r})(x) \, dx = r^{-Q} \smallint\limits_{G} f(x) \, dx.
\]
Note that if we define \(f_{r} := r^{-Q} f(r^{-1}\cdot)\), then
\begin{equation}\label{Eq:dilatedFourier}
    \widehat{f}_{r}(\pi) = \smallint\limits_{G} r^{-Q} f(r^{-1} \cdot x) \pi(x)^* \, dx = \smallint\limits_{G} f(y) \pi(r\cdot y)^{*} \, dy = \widehat{f}(r\cdot \pi),
\end{equation}
for any \(\pi\in \widehat{G}\) and all \(r>0\), with \((r\cdot \pi)(y) = \pi(r\cdot y)\), \(y\in G\), as in \eqref{dilations:repre}.
\end{remark}

The following lemma present the action of the dilations of the group $G$ into the kernels of bounded functions of a Rockland operator $\mathcal{R},$ see \cite[Page 179]{FischerRuzhanskyBook}.
\begin{lemma}\label{Fundamental:lemmaCZ:graded}
Let $\varkappa\in L^{\infty}(\mathbb{R}^{+}_0)$ and let $r>0.$ Then, we have the following identity
\begin{equation}
    \varkappa(r^{\nu}\mathcal{R})\delta(x)=r^{-Q}[\varkappa(\mathcal{R})\delta](r^{-1}\cdot x),
\end{equation}for all $x\in G.$
\end{lemma}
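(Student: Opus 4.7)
The strategy is to take the group Fourier transform of both sides and match them using Lemma \ref{dilationsrepre}. Set $K := \varkappa(\mathcal{R})\delta$ and $K_r := \varkappa(r^{\nu}\mathcal{R})\delta$, the right-convolution kernels of the left-invariant spectral multipliers $\varkappa(\mathcal{R})$ and $\varkappa(r^{\nu}\mathcal{R})$. By the identification of $\pi(\mathcal{R})$ with the Fourier transform of the convolution kernel of $\mathcal{R}$, combined with the Borel functional calculus for the positive self-adjoint operator $\mathcal{R}$, one has the operator-valued identities
\[
\widehat{K}(\pi) = \varkappa(\pi(\mathcal{R})), \qquad \widehat{K_r}(\pi) = \varkappa(r^{\nu} \pi(\mathcal{R})),
\]
for every $\pi\in\widehat{G}$.

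Next define $f_r(x) := r^{-Q} K(r^{-1}\cdot x)$, which is precisely the function appearing on the right-hand side of the claimed identity. Applying the dilation relation \eqref{Eq:dilatedFourier} from Remark \ref{DilationLebesgue} yields
\[
\widehat{f_r}(\pi) = \widehat{K}(r\cdot \pi) = \varkappa\bigl((r\cdot \pi)(\mathcal{R})\bigr).
\]
Lemma \ref{dilationsrepre} then gives $\varkappa\bigl((r\cdot \pi)(\mathcal{R})\bigr) = \varkappa(r^{\nu} \pi(\mathcal{R}))$, so that $\widehat{f_r}(\pi) = \widehat{K_r}(\pi)$ at every $\pi\in\widehat{G}$. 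Fourier inversion then delivers the pointwise identity $K_r(x) = f_r(x)$ on $G$, which is the stated assertion.

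The main technical obstacle I anticipate is that $\varkappa$ is only bounded, so a priori $K$ and $K_r$ are tempered distributions and the manipulations above must be interpreted at the level of operator-valued fields on $\widehat{G}$ arising from the spectral measure. This can be handled either by interpreting every equality distributionally and testing against $\mathscr{S}(G)$, or by approximating $\varkappa$ by a sequence of compactly supported continuous multipliers $\varkappa_n$, for which the associated kernels are Schwartz, verifying the identity classically, and then passing to the limit via continuity of the functional calculus in the strong operator topology together with the transformation rule of the Haar measure under $D_r$ with Jacobian $r^{-Q}$. Either route closes the argument without altering the chain of Fourier-side identities established above.
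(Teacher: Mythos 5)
Your proof is correct. A remark on context first: the paper does not actually supply its own proof of Lemma~\ref{Fundamental:lemmaCZ:graded}; it simply cites page~179 of Fischer--Ruzhansky \cite{FischerRuzhanskyBook}. In that reference the usual route is a ``physical side'' argument: the unitary dilations $U_r f = r^{Q/2}\,(f\circ D_r)$ on $L^2(G)$ intertwine $\mathcal R$ with $r^{\nu}\mathcal R$, i.e.\ $U_r^{-1}\mathcal R\, U_r = r^{\nu}\mathcal R$, so by the spectral theorem $U_r^{-1}\varkappa(\mathcal R)\,U_r = \varkappa(r^{\nu}\mathcal R)$, and evaluating both sides on a test function and reading off the kernel produces the desired scaling identity directly, with no recourse to the group Fourier transform.

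Your argument instead proceeds entirely on the Fourier side, and it is sound: you identify $\widehat{\varkappa(\mathcal R)\delta}(\pi)=\varkappa(\pi(\mathcal R))$, use the dilation rule \eqref{Eq:dilatedFourier} to compute $\widehat{f_r}(\pi)=\widehat{K}(r\cdot\pi)=\varkappa((r\cdot\pi)(\mathcal R))$, invoke Lemma~\ref{dilationsrepre} to convert this into $\varkappa(r^{\nu}\pi(\mathcal R))=\widehat{K_r}(\pi)$, and close with injectivity of the group Fourier transform. This buys consistency with the toolkit the paper itself develops (Remark~\ref{DilationLebesgue} and Lemma~\ref{dilationsrepre}), at the mild cost of leaning on the Plancherel/inversion machinery rather than the elementary conjugation identity $U_r^{-1}\mathcal R\, U_r=r^{\nu}\mathcal R$. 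You also correctly flag that for general $\varkappa\in L^\infty(\mathbb R_0^+)$ the equality must be read distributionally (the kernel of $\varkappa(\mathcal R)$ is only a tempered distribution), and both of your proposed remedies --- testing against $\mathscr S(G)$, or approximating $\varkappa$ by multipliers with Schwartz kernels and passing to the limit --- are standard and adequate. One minor stylistic caveat: the phrase ``pointwise identity $K_r(x)=f_r(x)$'' should really be ``equality of tempered distributions,'' which becomes a genuine pointwise identity on $G\setminus\{e\}$ only for those $\varkappa$ whose associated kernel is smooth away from the origin (as is the case for the specific $\varkappa(t)=(1+t)^{-Q\theta/(2\nu)}$ used later in the paper).
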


\section{Proof of the main theorem}\label{proof:section}

\subsection{Part 1: The Calder\'on-Zygmund decomposition}

Let $G$ be a graded Lie group with homogeneous dimension $Q$, and let $\mathcal{R}$ be a positive Rockland operator on $G$ of homogeneous degree $\nu>0$. In this section, we will prove that for a singular integral operator $T$ satisfying conditions \eqref{LxiG} and \eqref{GS:CZ:cond}, there exists a constant $C>0$ such that
\begin{equation}\label{To:proof} \Vert T f\Vert_{L^{1,\infty}(G)}:= \sup_{\alpha>0} \alpha|{x\in G:|Tf(x)|>\alpha}|\leq C\Vert f\Vert_{L^1(G)}, \end{equation} with $C$ independent of $f$.

We start by analysing the condition in \eqref{LxiG}.

\begin{remark}\label{Rem:Four:Conf}
Observe that, in view of the Plancherel theorem, the hypothesis \eqref{LxiG} in Theorem \ref{main:th} implies that the operator $(1+\mathcal{R})^{\frac{Q\theta}{2\nu}}T$ admits a bounded extension on $L^2(G)$. Indeed, the Plancherel theorem implies that
\begin{align*}
     \Vert (1+\mathcal{R})^{\frac{Q\theta}{2\nu}}Tf\Vert_{L^2(G)}^2 &= \smallint\limits_{ \widehat{G}} \|(1+\pi(\mathcal{R}))^{\frac{Q\theta}{2\nu}}\widehat{K}(\pi)\widehat{f}(\pi)\|_{\textnormal{HS}}^2 \, d\pi\\
     &\leq \smallint\limits_{ \widehat{G}} \|(1+\pi(\mathcal{R}))^{\frac{Q\theta}{2\nu}}\widehat{K}(\pi)\|^2_{\textnormal{op}} \|\widehat{f}(\pi)\|_{\textnormal{HS}}^2 \, d\pi,
\end{align*}
and in view of \eqref{LxiG} we obtain
$$ \Vert (1+\mathcal{R})^{\frac{Q\theta}{2\nu}}Tf\Vert_{L^2(G)}^2 \leq C^2 \smallint\limits_{ \widehat{G}} \|\widehat{f}(\pi)\|_{\textnormal{HS}}^2 \, d\pi = C^2 \Vert f\Vert_{L^2(G)}^2, $$
proving the boundedness of $(1+\mathcal{R})^{\frac{Q\theta}{2\nu}}T$ on $L^2(G).$ Since $(1+\mathcal{R})^{-\frac{Q\theta}{2\nu}}: L^2(G)\rightarrow L^2(G)$ is bounded, we deduce that
\[
T= (1+\mathcal{R})^{-\frac{Q\theta}{2\nu}}(1+\mathcal{R})^{\frac{Q\theta}{2\nu}}T
\]
is also bounded on $L^2(G).$ 

Note also that the condition
$$
\sup_{\pi\in \widehat{G}} \Vert \widehat{K}(\pi)(1+\pi(\mathcal{R}))^{\frac{Q\theta}{2\nu}}\Vert_{\textnormal{op}}<\infty,
$$
provides the boundedness of the operator $T(1+\mathcal{R})^{\frac{Q\theta}{2\nu}}$ from $L^2(G)$ to itself.
\end{remark}

For the proof of \eqref{To:proof}, fix $f\in L^1(G),$ and let us consider its Calder\'on-Zygmund decomposition, see Coifman and Weiss \cite[Pages 73-74]{CoifmanWeiss}. So, for any $\gamma,\alpha>0, $ 
we  have the decomposition  $$f=g+b=g+\sum_{j}b_j,$$ where the following properties are satisfied. 
\begin{itemize}
    \item[(1)] $\Vert g \Vert_{L^\infty}\lesssim_{G} \gamma \alpha$ and $\Vert g\Vert_{L^1}\lesssim_{G} \Vert f\Vert_{L^1}.$
    \item[(2)] The $b_j$'s are supported in  open balls $I_j=B(x_j,r_j)$ where they satisfy the cancellation property
    \begin{equation}
        \smallint\limits_{I_j}b_j(x)dx=0.
    \end{equation}
     \item[(3)] Any component $b_j$ satisfies the $L^1$-estimate 
     \begin{equation}
        \Vert b_j\Vert_{L^1}\lesssim_{G} (\gamma \alpha)|I_j|.
\end{equation}
    \item[(4)] The sequence $\{|I_{j}|\}_{j}\in \ell^1$ and
    \begin{equation}\label{I:j}
    \sum_{j} |I_j|\lesssim_{G} (\gamma\alpha)^{-1}    \Vert f\Vert_{L^1}.   
    \end{equation}
   \item[(5)] 
    $$\Vert b\Vert_{L^1}\leq\sum_j  \Vert b_j\Vert_{L^1}\lesssim_{G} \Vert f\Vert_{L^1}.$$
    \item[(6)] There exists $M_0\in \mathbb{N},$ such that any point $x\in G$ belongs at most to $M_0$ balls of the collection $I_j.$
\end{itemize}

So, by fixing $\alpha\gamma>0,$ note that in terms of $g,$ $b$ and of $f$ one has the trivial estimate 
\begin{align*}
    |\{x:|Tf(x)|>\alpha\}|\leq |\{x:|Tg(x)|>\alpha/2\}|+|\{x:|Tb(x)|>\alpha/2\}|. 
\end{align*}
The estimates $\Vert g \Vert_{L^\infty}\lesssim \gamma \alpha$ and $\Vert g\Vert_{L^1}\lesssim \Vert f\Vert_{L^1},$ imply that
\begin{align*}
     \Vert g\Vert_{L^2}^2\leq\Vert g\Vert_{L^\infty}\Vert g\Vert_{L^1}\lesssim  (\gamma \alpha)\Vert f\Vert_{L^1}. 
\end{align*}
So, by applying the Chebishev inequality and the $L^2$-boundedness of $T,$ we have 
\begin{align*}
   &|\{x:|Tg(x)|>\alpha/2\}|\lesssim 2^2\alpha^{-2}\Vert Tg \Vert^2_{L^2}\leq (2\Vert T \Vert_{\mathscr{B}(L^2)})^2\alpha^{-2}\Vert g\Vert_{L^2}^2 \\
  &\leq (2\Vert T \Vert_{\mathscr{B}(L^2)})^2\alpha^{-2}(\gamma \alpha)\Vert f\Vert_{L^1}\lesssim\Vert T \Vert_{\mathscr{B}(L^2)}^2\gamma\alpha^{-1}\Vert f\Vert_{L^1}\\
   &\lesssim_{\gamma} \alpha^{-1}\Vert f\Vert_{L^1}.
\end{align*}
 In what  follows,  let us denote   $I^*=\bigcup I_j^*,$ where $$ I_j^*=B(x_j,2r_j)=\{x\in G:|x_{j}^{-1}x|<2R_j\},$$  and let us make use of the doubling condition in order to have the estimate
$$  |I^*_j|\sim  2^Q|I_j|, $$ from which it follows that 
$$ |I^*|\lesssim \sum_j|I_j^{*}| \lesssim \sum_j|I_j|\lesssim  \gamma^{-1}\alpha^{-1}\Vert f\Vert_{L^1}.   $$
Consequently, we have the estimates 
$$ |\{x:|Tb(x)|>\alpha/2\}|\leq |I^*|+ |\{x\in G\setminus I^* :|Tb(x)|>\alpha/2\}|  $$
$$ \lesssim\gamma^{-1}\alpha^{-1}\Vert f\Vert_{L^1}+|\{x\in G\setminus I^* :|Tb(x)|>\alpha/2\}| .  $$
$$ \lesssim_{\gamma} \alpha^{-1}\Vert f\Vert_{L^1}+|\{x\in G\setminus I^* :|Tb(x)|>\alpha/2\}| .  $$
So, to conclude the inequality \eqref{To:proof} we have to prove that 
\begin{equation}\label{To:proof:2}
  \sup_{\alpha>0}  \alpha|\{x\in G\setminus I^*:|Tb(x)|>\alpha/2\}|\leq C\Vert f\Vert_{L^1},
\end{equation}with $C$ independent of $f.$ So,  the proof of  Theorem \ref{main:th} consists of estimating the term $$|\{x\in G\setminus I^*:|Tb(x)|>\alpha/2\}|.$$

\subsection{Part 2: Proof of the oscillating case} The relevant case in Theorem \ref{main:th} is when $\theta \in (0,1)$. We begin the proof of the main theorem by explaining this claim.

\begin{proof}[Proof of Theorem \ref{main:th}]
We first consider the case $0 < \theta < 1$. Indeed, the statement for $\theta = 0$ in Theorem \ref{main:th} follows from the fundamental theorem of singular integrals due to Coifman and Weiss; see \cite[Theorem 2.4, Page 74]{CoifmanWeiss}. In fact, a graded Lie group satisfies the doubling condition, and hence it is a homogeneous topological space in the sense of Coifman and Weiss \cite{CoifmanWeiss}.

For $0 < \theta < 1$, we require some geometric transformations on the support of $K$, which we present in the next subsection. To achieve this, we will use two modern techniques: the stability of the powers of Rockland operators within the H\"ormander classes $S^{m}_{1,0}(G \times \widehat{G})$, and the Calder\'on-Vaillancourt theorem on graded Lie groups, as presented in \cite[Section 5.7, Theorem 5.7.1]{FischerRuzhanskyBook}.

Following our hypothesis, from now on, let us assume that the diameter of the support of $K$ is small, for instance, that
$$ \textnormal{diam}(\textnormal{supp}(K)) < c \leq 1, $$
where $0 < c < 1$ is suitable for our purposes. With this in mind, the next step is to construct a good replacement $\tilde{b}$ for $b$, which allows us to exploit the hypothesis in \eqref{GS:CZ:cond}.

\subsection{Part 3: The function $\phi.$ New family of supports}
Let us consider a function $\phi$ on $G$ such that
\begin{equation}
    \smallint\limits_{G}\phi(x)dx=1,\textnormal{  and  } \phi\in C^{\infty}_0(G,[0,\infty))\cap\textnormal{Dom}[\mathcal{R}^{-\frac{Q\theta}{2\nu}}].
\end{equation}
For $\varepsilon>0,$ 
define
\begin{equation}
    \phi(y,\varepsilon):=\varepsilon^{-Q}\phi(\varepsilon^{-1}\cdot x).
\end{equation}
Now, for any $j,$ define
\begin{equation}
    \phi_{j}(y):=\phi(y,2^{-\frac{1}{1-\theta}}\textnormal{diam}(I_j)^{\frac{1}{1-\theta}}),\,\,\,
\end{equation}
\begin{equation}
   \tilde{b}_{j}:=b_{j}(\cdot)*\phi_{j},\,\,
\end{equation}and 
\begin{equation}
    \tilde{b}:=\sum_{j}\tilde{b}_{j}.
\end{equation}Note that
\begin{equation}\label{bjk:re:def}
  Tb(x)=\sum_{j} T{b}_{j}(x),
\end{equation}for a.e. $x\in G.$
It is important to mention that in \eqref{bjk:re:def}, the sums on the right-hand side only run over $j$ such that $\textnormal{diam}(I_j) < c$. Indeed, for all $x \in G \setminus I^*$, the property of the support $\textnormal{diam}(\textnormal{supp}(K)) < c$ implies that for all $j$ with $\textnormal{diam}(I_j) \geq c$, we have
$$ T b_j = b_j \ast K = 0. $$
So, we only need to analyze the case where $\textnormal{diam}(I_j) < c$. Indeed, for $x \in G \setminus I^*$ and $j$ such that $\textnormal{diam}(I_j) \geq c$, we have
$$  b_{j} \ast K(x) = \int_{I_j} K(y^{-1}x) b_j(y) \, dy. $$
Since in the integral above $x \in G \setminus I^*$ and $y \in I_j$, we have
$$ |y^{-1}x| = \textnormal{dist}(x, y) > \textnormal{diam}(I_j) > c, $$
we conclude that the element $y^{-1}x$ is not in the support of $K$, and thus the integral vanishes.

Now, going back to the analysis of \eqref{To:proof:2},
note that
\begin{align*}
  &|\{x\in G\setminus I^*:|Tb(x)|>\frac{\alpha}{2}\}|\\
  &\leq |\{x\in G\setminus I^*:|Tb(x)-T\tilde{b}(x)|>\frac{\alpha}{4}\}|+|\{x\in G\setminus I^*:|T\tilde b(x)|>\frac{\alpha}{4}\}|\\
  &\leq \frac{4}{\alpha}\Vert T(b-\tilde{b})\Vert_{L^1(G\setminus I^*)}+|\{x\in G\setminus I^*:|T\tilde b(x)|>\frac{\alpha}{4}\}|,
\end{align*}
and let us take into account the estimate:
\begin{align*}
    \Vert T(b-\tilde{b})\Vert_{L^1(G\setminus I^*)} &=\smallint\limits_{ G\setminus I^*}|Tb(x)-T\tilde{b}(x)|dx\\
    &\leq \sum_{j}\smallint\limits_{ G\setminus I^*}|Tb_{j}(x)-T\tilde{b}_{j}(x)|dx.
\end{align*} We are going to prove that $T \tilde{b}$ and $T \tilde{b}_j$ are good replacements for $T b$ and $T b_j$, respectively, on the set $G \setminus I^*$.  
Observe that 
\begin{align*}
  & \smallint\limits_{ G\setminus I^*}|Tb_{j}(x)-T\tilde{b}_{j}(x)|dx=\smallint\limits_{ G\setminus I^*}| b_{j}\ast K(x)-\tilde{b}_{j}\ast K(x)|dx \\
   &=\smallint\limits_{ G\setminus I^*}| b_{j}\ast K(x)-[{b}_{j}\ast \phi_j\ast K](x)|dx \\
   &=\smallint\limits_{ G\setminus I^*}\left|\smallint\limits_{I_j} K(y^{-1}x)b_{j}(y)dy-\smallint\limits_{I_j}(\phi_j\ast K)(y^{-1}x){b}_{j}(y)dy\right|dx \\
   &\leq\smallint\limits_{I_j} \smallint\limits_{ G\setminus I^*}| K(y^{-1}x)-\phi_j\ast K(y^{-1}x)|dx |b_j(y)|dy \\
    &\leq\smallint\limits_{I_j} \smallint\limits_{ |z|>\textnormal{diam}(I_j)}| K(z)-\phi_j\ast K
    (z)|dz |b_{j}(y)|dy\\
    &\leq\smallint\limits_{I_j} \smallint\limits_{ |z|>\textnormal{diam}(I_j)}| K(z)-\phi_j\ast K(z)|dz |b_{j}(y)|dy\\
    &=\smallint\limits_{I_j}|b_{j}(y)|dy \smallint\limits_{ |z|>\textnormal{diam}(I_j)}| K(z)-\phi_j\ast K(z)|dz,
\end{align*}where, in the last line, we have used the change of variables $x \mapsto z = y^{-1}x$, and then we observe that $|z| > \textnormal{diam}(I_j)$ when $x \in G \setminus I^*$ and $y \in I_j.$ Using that $\phi_j$ is supported in a ball of radius 
$$ R_j := 2^{-\frac{1}{1-\theta}} \textnormal{diam}(I_j)^{\frac{1}{1-\theta}}, $$
and that $\|\phi_j\|_{L^1} = 1,$ we have that
\begin{align*}
   & \smallint\limits_{ |z|>\textnormal{diam}(I_j)  }| K(z)-\phi_j\ast K(z)|dz\\
   &=\smallint\limits_{ |z|>\textnormal{diam}(I_j)  }\left| K(z)\smallint\limits_{|y|<2^{-\frac{1}{1-\theta}}\textnormal{diam}(I_j)^{\frac{1}{1-\theta}}}\phi_{j}(y)dy-\smallint\limits_{|y|<2^{-\frac{1}{1-\theta}}\textnormal{diam}(I_j)^{\frac{1}{1-\theta}}}K(y^{-1}z)\phi_j(y)dy\right|dz\\
    &=\smallint\limits_{ |z|>\textnormal{diam}(I_j)  }\left|\, \smallint\limits_{|y|<2^{-\frac{1}{1-\theta}}\textnormal{diam}(I_j)^{\frac{1}{1-\theta}}}(K(z)-K(y^{-1}z))\phi_j(y)dy\right|dz\\
     &\leq   \smallint\limits_{|y|<2^{-\frac{1}{1-\theta}}\textnormal{diam}(I_j)^{\frac{1}{1-\theta}}} \smallint\limits_{ |z|>\textnormal{diam}(I_j)  }|K(y^{-1}z)-K(z)|dz|\phi_j(y)|dy\\
     &\lesssim \frac{1}{R_j^Q}  \smallint\limits_{|y|<R_j} \smallint\limits_{ |z|> 2R_j^{(1-\theta)}}|K(y^{-1}z)-K(z)|dz\,dy\\
     &\leq [K]_{H_{\infty,\theta}(G)},
\end{align*}where we have used that $2R_j^{1-\theta}=\textnormal{diam}(I_j).$ 
Now, the inequalities above allow us to finish the estimate of $\Vert T(b-\tilde{b})\Vert_{L^1(G\setminus I^*)}.$ Indeed,
\begin{align*}
    \Vert T(b-\tilde{b})\Vert_{L^1(G\setminus I^*)} &\leq \sum_{j} \smallint\limits_{I_j} \smallint\limits_{ G\setminus I^*}| K(y^{-1}x)-\phi_j\ast K(y^{-1}x)|dx |b_j(y)|dy \\
    &\leq\sum_{j}\smallint\limits_{I_j} \smallint\limits_{ |z|>\textnormal{diam}(I_j)} |K(z)-\phi_j\ast K(z)|dz |b_j(y)|dy\\
    &\lesssim [K]_{H_{\infty,\theta}(G)} \sum_{j}\smallint\limits_{I_j}|b_j(y)|dy\leq [K]_{H_{\infty,\theta}(G)}\Vert b \Vert_{L^1}\\
    &\lesssim [K]_{H_{\infty,\theta}(G)}\Vert f\Vert_{L^1}.
\end{align*}Putting together the estimates above we deduce that
\begin{align*}
    & |\{x\in G\setminus I^*:|Tb(x)|>\frac{\alpha}{2}\}|\leq   \frac{4}{\alpha}\Vert Tb-T\tilde{b}\Vert_{L^1(G\setminus I^*)}+|\{x\in G\setminus I^*:|T\tilde b(x)|>\frac{\alpha}{4}\}|\\
     &\lesssim \frac{4}{\alpha}[K]_{H_{\infty,\theta}(G)}\Vert f\Vert_{L^1}+|\{x\in G\setminus I^*:|T\tilde b(x)|>\frac{\alpha}{4}\}|.
\end{align*}Now, we will estimate the second term on the right hand side of this inequality. First, note that
$$
    |\{x\in G\setminus I^*:|T\tilde b(x)|>\frac{\alpha}{4}\}|\leq   |\{x\in G\setminus I^*:|T\tilde b(x)|^2>\frac{\alpha^2}{16}\}|
$$
$$\leq \frac{16}{\alpha^2}\Vert T\tilde{b}\Vert_{L^2}^2. 
$$ 

Now, using  \eqref{LxiG} we deduce that $T(1+\mathcal{R})^{\frac{Q\theta}{2\nu}}$ is bounded on $L^2$ (see Remark \ref{Rem:Four:Conf}), and then
\begin{equation}\label{asterisque}\Vert T\tilde{b}\Vert^2_{L^2}\leq \Vert T(1+\mathcal{R})^{\frac{Q\theta}{2\nu}}\Vert_{\mathscr{B}(L^2)}^2\Vert(1+\mathcal{R})^{-\frac{Q\theta}{2\nu}}\tilde{b}\Vert_{L^2}^2.
\end{equation}
In the case of $G=\mathbb{R}^n,$ and of the positive Laplace operator $\mathcal{R}=-\Delta,$ it was proved by Fefferman that the function $F=(1+\Delta)^{-\frac{Q\theta}{2\nu}}\tilde{b},$ admits a nice decomposition $F=F_1+F_2,$ where  $  \Vert F_2\Vert_{L^2}^2\leq C\alpha\gamma\Vert f\Vert_{L^1},$ and $F_{1}=\sum_{j:\textnormal{diam}(I_j)<1}F_{1}^{j},$ where $\Vert F_{1}^{j}\Vert_{L^2}^2\leq A'\alpha^2|I_j|.$ We will extend Fefferman's decomposition to an arbitrary Rockland operator $\mathcal{R}$ in Lemma \ref{lemma:Fefferman} below.

\subsection{Part 4: A Fefferman type decomposition for Rockland operators}
In order to estimate the $L^2$-norm $\Vert(1+\mathcal{R})^{-\frac{Q\theta}{2\nu}}\tilde{b}\Vert_{L^2}^2$, let us use the following lemma whose proof we postpone for a moment.
\begin{lemma}\label{lemma:Fefferman} The function $F:=(1+\mathcal{R})^{-\frac{Q\theta}{2\nu}}\tilde{b}$ can be decomposed in a sum $F=F_1+F_2,$ where $  \Vert F_2\Vert_{L^2}^2\leq C\alpha\gamma\Vert f\Vert_{L^1},$ and $F_1$ is also a sum of  functions $F_{1}^{j}$ with the following property:
\begin{itemize}
    \item There exists $M_0\in \mathbb{N},$ and $A'>0,$ such that $F_{1}=\sum_{j:\textnormal{diam}(I_j)<1}F_{1}^{j}$,  $\Vert F_{1}^{j}\Vert_{L^2}^2\leq A'\alpha^2|I_j|,$ and for any $x\in G,$ there are at most $M_0$ values of $j$ such that  $F_1^j(x)\neq 0.$ 
\end{itemize}

\end{lemma}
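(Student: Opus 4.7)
\emph{Proof plan.} My plan is to use pointwise estimates for the Bessel-type convolution kernel $\psi$ of $(1+\mathcal{R})^{-Q\theta/(2\nu)}$, to rewrite each piece as $\widetilde b_j\ast\psi=b_j\ast g_j$ with $g_j:=\phi_j\ast\psi$, and then to split $g_j$ into a local and a tail part at the spatial scale $\textnormal{diam}(I_j)$. First I would establish the pointwise bound $|\psi(x)|\lesssim |x|^{-Q(1-\theta/2)}$ for $|x|\leq 1$ together with rapid decay at infinity, starting from the subordination identity
\[
\psi=\frac{1}{\Gamma(Q\theta/(2\nu))}\smallint\limits_0^\infty e^{-t}\,p_t\, t^{\frac{Q\theta}{2\nu}-1}\,dt,
\]
combined with the Gaussian-type upper bounds for the heat kernel $p_t$ of $\mathcal{R}$ on the graded group $G$. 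Writing $R_j:=2^{-1/(1-\theta)}\textnormal{diam}(I_j)^{1/(1-\theta)}$, these translate into the smoothed-kernel estimates $\|g_j\|_{L^\infty}\lesssim R_j^{-Q(1-\theta/2)}$ and $|g_j(x)|\lesssim |x|^{-Q(1-\theta/2)}$ for $|x|>2R_j$.

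Second, for each $j$ with $\textnormal{diam}(I_j)<1$ I would pick a smooth cutoff $\eta_j$ supported in the ball of radius $4\,\textnormal{diam}(I_j)$ and equal to $1$ on the ball of radius $2\,\textnormal{diam}(I_j)$, and write $g_j=g_j^{\mathrm{loc}}+g_j^{\mathrm{tail}}$ with $g_j^{\mathrm{loc}}:=\eta_j g_j$. Setting
\[
F_1^j:=b_j\ast g_j^{\mathrm{loc}},\qquad F_1:=\sum_{\textnormal{diam}(I_j)<1}F_1^j,\qquad F_2:=F-F_1,
\]
the support of $F_1^j$ lies in an enlargement of $I_j$ of volume comparable to $|I_j|$, so bounded overlap of the $F_1^j$'s follows (with a suitable $M_0$) from property (6) of the Calder\'on--Zygmund decomposition. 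The $L^2$ estimate follows from Young's inequality,
\[
\|F_1^j\|_{L^2}\leq \|b_j\|_{L^1}\|g_j^{\mathrm{loc}}\|_{L^2}\lesssim \gamma\alpha|I_j|\cdot\|g_j^{\mathrm{loc}}\|_{L^2},
\]
where the inner integral $\|g_j^{\mathrm{loc}}\|_{L^2}^2\lesssim R_j^{-Q(1-\theta)}\sim |I_j|^{-1}$ is obtained from the pointwise bounds of the previous paragraph together with the identity $R_j^{1-\theta}\sim \textnormal{diam}(I_j)$ (which underlies the choice of mollification scale). This yields $\|F_1^j\|_{L^2}^2\leq A'\alpha^2|I_j|$ with $A'$ depending only on $\gamma$ and $G$.

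The hard part will be the estimate $\|F_2\|_{L^2}^2\lesssim \alpha\gamma\|f\|_{L^1}$. Since the tail pieces $F_2^j$ have large supports and are not pairwise orthogonal, a direct triangle-inequality summation loses a factor depending on the number of indices $j$. To circumvent this I would use the Plancherel identity on $\widehat G$ to write
\[
\|F_2\|_{L^2}^2=\smallint\limits_{\widehat G}\|\widehat{F_2}(\pi)\|_{\mathrm{HS}}^2\,d\pi,
\]
exploit the rapid decay of $\widehat{\phi_j}(\pi)=\widehat{\phi}(R_j\cdot\pi)$ in Lemma \ref{dilationsrepre} outside the frequency band $\{\pi(\mathcal{R})\lesssim R_j^{-\nu}\}$, and combine it with the spectral estimate $(1+\pi(\mathcal{R}))^{-Q\theta/(2\nu)}\lesssim R_j^{Q\theta/2}$ on that band. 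A Cotlar--Stein almost-orthogonality argument, organised by dyadic ranges of $R_j$ and using bounded overlap of the $I_j$'s together with the packing inequality $\sum_j|I_j|\lesssim(\gamma\alpha)^{-1}\|f\|_{L^1}$, should then deliver the required bound. This step, in which the spectral-multiplier calculus of the Rockland operator $\mathcal{R}$ replaces Fefferman's use of pointwise Bessel-kernel asymptotics in the Euclidean setting, is the technical heart of the lemma.
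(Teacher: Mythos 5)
Your proposal reaches the same key quantitative estimate as the paper, namely $\Vert \phi_j\ast k_\theta\Vert_{L^2}^2\lesssim |I_j|^{-1}$ (you phrase it as $\Vert g_j^{\mathrm{loc}}\Vert_{L^2}^2\lesssim R_j^{-Q(1-\theta)}$), but the two proofs diverge in organisation and, more importantly, in the treatment of the ``far'' part.

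For $F_1$ your route and the paper's are genuinely different in technique but both sound. You derive the $L^2$ bound on the local kernel from the pointwise estimates $|k_\theta(x)|\lesssim|x|^{-Q(1-\theta/2)}$ (via heat-kernel subordination), whereas the paper never uses a pointwise bound to compute that $L^2$ norm: it passes through Plancherel and the dilation covariance of the spectral calculus of $\mathcal{R}$ (Lemma \ref{Fundamental:lemmaCZ:graded}), writing $\widehat{\phi_j}(\pi)=\widehat{\psi_j}(R_j\cdot\pi)$ and rescaling $(1+\pi(\mathcal{R}))^{-Q\theta/(2\nu)}$ to reduce everything to a fixed quantity $\Vert\mathcal{R}^{-Q\theta/(2\nu)}\phi\Vert_{L^2}$. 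Your approach is more elementary and closer to Fefferman's original Euclidean computation; the paper's is cleaner at the level of the group Fourier transform and makes the role of the Rockland operator explicit. Also note that the paper does not cut the kernel: it defines $G_1$ and $G_2$ by the \emph{pointwise} index condition $x\sim I_j$ versus $x\nsim I_j$, so $G^j$ is simply $b_j\ast\phi_j\ast k_\theta$ restricted to $I_j$, while you truncate the kernel $g_j$ to a ball of radius $\sim\textnormal{diam}(I_j)$. Either way bounded overlap of the enlarged supports is the same input (property (6) plus the Whitney/Vitali structure of the $I_j$).

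For $F_2$ the divergence is more serious and is where your plan is at risk. The paper does not touch Plancherel, almost orthogonality, or Cotlar--Stein for the far part. Instead it proves $\Vert G_2\Vert_{L^1}\lesssim\Vert f\Vert_{L^1}$ (because $0<\theta<1$ forces $k_\theta\in L^1$, so $\Vert\phi_j\ast k_\theta\Vert_{L^1}\lesssim 1$ uniformly in $j$), and $\Vert G_2\Vert_{L^\infty}\lesssim\gamma\alpha$, the latter by the observation that when $x\nsim I_j$ the smoothed kernel $\phi_j\ast|k_\theta|(y^{-1}x)$ is essentially constant over $y\in I_j$, so the supremum over $I_j$ can be replaced by an average and $\tfrac{1}{|I_j|}\int_{I_j}|b_j|\lesssim\gamma\alpha$ can be extracted. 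Then $\Vert G_2\Vert_{L^2}^2\le\Vert G_2\Vert_{L^1}\Vert G_2\Vert_{L^\infty}\lesssim\gamma\alpha\Vert f\Vert_{L^1}$. This $L^1$--$L^\infty$ interpolation trick is the step you should use in place of your sketched Cotlar--Stein argument. As written, your far-part plan is not obviously realisable: Cotlar--Stein controls operator norms, not $L^2$ norms of a fixed sum of functions, and passing to a double sum $\sum_{j,j'}\langle b_j\ast g_j^{\mathrm{tail}},b_{j'}\ast g_{j'}^{\mathrm{tail}}\rangle$ requires off-diagonal decay estimates that you do not indicate how to obtain; the ``should then deliver the required bound'' is precisely where your proposal has a gap. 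You should replace that paragraph with the elementary $\Vert\cdot\Vert_{L^1}\Vert\cdot\Vert_{L^\infty}$ bound.
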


Let us continue with the proof of Theorem \ref{main:th}. 
Using  Lemma \ref{lemma:Fefferman} and the inequalities in \eqref{I:j} and \eqref{asterisque} we have that
\begin{align*}
\Vert T\tilde{b}\Vert_{L^2}^2&\lesssim \Vert F_1\Vert_{L^2}^2+\Vert F_2\Vert_{L^2}^2 \lesssim \alpha\gamma\Vert f\Vert_{L^1}+ \sum_{j}\Vert F_1^{j} \Vert_{L^2}^2\\
&\lesssim \alpha\gamma\Vert f\Vert_{L^1}+ \sum_j\alpha^2|I_j|\lesssim \alpha\gamma\Vert f\Vert_{L^1}+\alpha^2(\gamma\alpha)^{-1}\Vert f\Vert_{L^1}\\
    &= (\gamma^{-1}+\gamma)\alpha\Vert f\Vert_{L^1}.
\end{align*}
Consequently
$$
    |\{x\in G\setminus I^*:|T\tilde b(x)|>\frac{\alpha}{4}\}|\lesssim \frac{16}{\alpha^2}\Vert T\tilde{b}\Vert_{L^2}^2 
\lesssim (\gamma^{-1}+\gamma)\alpha^{-1}\Vert f\Vert_{L^1}.
$$ Thus, we have proved that
\begin{equation}
   |\{x\in \mathbb{R}^n:|Tf(x)|>\alpha\}|\leq C_{\gamma,[K]_{H_{\infty,\theta}(G)}}\alpha^{-1}\Vert f\Vert_{L^1},
\end{equation}with $C:=C_{\gamma,[K]_{H_{\infty,\theta}(G)}}$ independent of $f.$ 

Because $\gamma$ is fixed, we have proved the weak $(1,1)$ type of $T$. Thus, the proof is complete once we prove the statement in Lemma \ref{lemma:Fefferman}. To do so, let us consider the right-convolution kernel $k_\theta := (1+\mathcal{R})^{-\frac{Q\theta}{2\nu}}\delta$ of the operator $(1+\mathcal{R})^{-\frac{Q\theta}{2\nu}}$, and let us split the function $(1+\mathcal{R})^{-\frac{Q\theta}{2\nu}}\tilde{b}(x)$ as follows:

\begin{align*}
   (1+\mathcal{R})^{-\frac{Q\theta}{2\nu}}\tilde{b}(x)&=\sum_{j} (1+\mathcal{R})^{-\frac{Q\theta}{2\nu}}\tilde{b}_{j}(x)=\sum_{j} \tilde{b}_{j}\ast k_\theta(x)\\
   &=\sum_{j:x\sim I_j } \tilde{b}_{j}\ast k_\theta(x)+\sum_{j:x\nsim I_j} \tilde{b}_{j}\ast k_\theta(x)=:G_{1}(x)+G_{2}(x),
\end{align*}where $G_{1}(x):=\sum_{j:x\sim I_j} \tilde{b}_{j}\ast k_\theta(x)$ and $G_{2}(x):=\sum_{j:x\nsim I_j} \tilde{b}_{j}\ast k_\theta(x).$ We denote by \( x \sim I_j \) if \( x \) belongs to \( I_j \) or to some \( I_{j'} \) with a non-empty intersection with \( I_j \). By the properties of these sets, there are at most \( M_0 \) sets \( I_{j'} \) such that \( I_j \cap I_{j'} \neq \emptyset \). Also, the notation \( x \nsim I_j \) will be used to define the opposite of the previous property.

Let us prove the estimate
\begin{equation}\label{esti:G2}
    \Vert G_2\Vert_{L^2}^2\leq C\alpha\gamma\Vert f\Vert_{L^1}.
\end{equation} Observe that
\begin{align*}
   \Vert G_2\Vert_{L^1} &=\smallint\limits_G| \sum_{j:x\nsim I_j} {b}_{j}\ast \phi_j\ast k_\theta(x)|dx \leq \sum_{j:x\nsim I_j}\smallint\limits_G| {b}_{j}\ast \phi_j\ast k_\theta(x)|dx \\
   &\leq  \sum_{j}\| {b}_{j}\ast \phi_j\ast k_\theta\|_{L^1}\leq  \sum_{j}\| {b}_{j}\Vert_{L^1}\Vert \phi_j\ast k_\theta\|_{L^1}. 
\end{align*}Note that $(1+\mathcal{R})^{-\frac{Q\theta}{2\nu}}$ is a pseudo-differential operator of order $-Q\theta/2,$ and consequently, its kernel satisfies the estimate (see \cite[Theorem 5.4.1]{FischerRuzhanskyBook})
\begin{equation*}
    |k_{\theta}(x)|\leq C_N|x|^{{-(-\frac{Q\theta}{2}+Q)}}\lesssim |x|^{-{Q(1-\frac{\theta}{2})}},\,x\in B(e,N)\setminus\{e\},
\end{equation*}
for any \( N \in \mathbb{N} \), while outside of any ball \( B(e, N) \), one has, for any \( M > 0 \), that there exists \( C_M > 0 \) such that (see Theorem 5.4.1 of \cite{FischerRuzhanskyBook})
\[
|k_{\theta}(x)| \leq C_M(1 + |x|)^{-M}.
\]
The condition \( 0 < \theta < 1 \) implies that \( k_\theta \) is an integrable distribution, and then
\begin{equation*}
  \Vert \phi_j\ast k_\theta\|_{L^1} \lesssim \Vert \phi_j\|_{L^1} \| k_\theta\|_{L^1}=\| k_\theta\|_{L^1}<\infty.   
\end{equation*}
So, we have that
\begin{align*}
     \Vert G_2\Vert_{L^1}\lesssim \sum_{j}\|b_j\|_{L^1}\lesssim\Vert f \Vert_{L^1}.
\end{align*}So, for the proof of \eqref{esti:G2},  in view of the inequality $ \Vert G_2\Vert^2_{L^2}\leq \Vert G_2\Vert_{L^1} \Vert G_2\Vert_{L^\infty}$ is enough to show that
$$ \Vert G_2\Vert_{L^\infty}\lesssim\alpha\gamma.$$
To do this, let us consider $j$ such that  $x\nsim I_j.$ Since $\tilde{b}_{j}\ast k_\theta={b}_{j}\ast \phi_j\ast k_\theta,$ one has that
\begin{align*}
 | {b}_{j}\ast \phi_j\ast k_\theta(x)|&\leq \smallint\limits_{I_j}|(\phi_j\ast k_\theta)(y^{-1}x)||b_j(y)|dy\leq \sup_{y\in I_j}|(\phi_j\ast k_\theta)(y^{-1}x)|  \smallint\limits_{I_j}|b_j(y)|dy\\
 &=\sup_{y\in I_j}|\phi_j\ast k_\theta(y^{-1}x)| |I_j|\times \frac{1}{|I_j|} \smallint\limits_{I_j}|b_j(y)|dy.
\end{align*}
To continue, we follow the observation of Fefferman in \cite[Page 26]{Fefferman1970}, which states that, in view of the property \( x \nsim I_j \), we have that \( \phi_j \ast |k_\theta|(y^{-1}x) \) is essentially constant over the ball \( I_j = B(x_j, r_j) \), and we can estimate
\begin{equation}
    \sup_{y\in I_j}|\phi_j\ast k_\theta(y^{-1}x)| |I_j|\leq \sup_{y\in I_j}\phi_j\ast |k_\theta|(y^{-1}x)| |I_j|\lesssim \smallint\limits_{I_j}\phi_j\ast |k_\theta(y'^{-1}x)|dy'.
\end{equation}On the other hand, using again the positivity of \( \phi_j \) leads to
\begin{align*}
    \smallint\limits_{I_j}|\phi_j\ast k_\theta(y'^{-1}x)|dy' \frac{1}{|I_j|} \smallint\limits_{I_j}|b_j(y)|dy
    &\leq\smallint\limits_{G}(\phi_j\ast |k_\theta|)(y'^{-1}x) \left( \frac{1}{|I_j|} \smallint\limits_{I_j}|b_j(y)|dy\right)1_{I_{j}}(y')dy'\\
    &= \left(\frac{1}{|I_j|} \smallint\limits_{I_j}|b_j(y)|dy\times1_{I_j}\right)\ast\phi_j\ast| k_\theta|(x).
\end{align*}Consequently, we have that
\begin{align*}
    |G_2(x)|&\leq \sum_{j:x\nsim I_j} |\tilde{b}_{j}\ast k_\theta(x)|\lesssim  \sum_{j:x\nsim I_j}\left(\frac{1}{|I_j|} \smallint\limits_{I_j}|b_j(y)|dy\times1_{I_j}\right)\ast\phi_j\ast |k_\theta|(x)\\
    &\lesssim\sum_{j:x\nsim I_j}\gamma\alpha\times1_{I_j}\ast\phi_j\ast |k_\theta|(x)=\smallint\limits_{G}\sum_{j:x\nsim I_j}\gamma\alpha\times1_{I_j}\ast\phi_j(z) |k_\theta(z^{-1}x)|dz\\
    &\lesssim \gamma\alpha \Vert k_\theta\Vert_{L^1}\left\Vert \sum_{j:x\nsim I_j}1_{I_j}\ast\phi_j \right\Vert_{L^{\infty}}.
\end{align*} By observing that  the supports of the functions $1_{I_j}\ast\phi_j $ have bounded overlaps we have that $$\left\Vert \sum_{j:x\nsim I_j}1_{I_j}\ast\phi_j \right\Vert_{L^{\infty}}<\infty,$$ and that $\Vert G_2\Vert_{L^\infty}\lesssim\gamma\alpha.$ It remains only to prove that $\Vert G_1\Vert_{L^2}^2\lesssim \alpha \gamma\Vert f\Vert_{L^1}.$ Let us define
\begin{equation}\label{F:j:iproof}G^{j}(x):= \begin{cases}{b}_{j}\ast \phi_j\ast k_\theta(x),& \text{ }x\in I_j,
\\0 ,& \,\text{otherwise. } \end{cases}
\end{equation}Then \( G_1 = \sum_{j} G^{j} \), and in view of the finite overlap of the balls \( I_{j} \)'s, there exists \( M_0 \in \mathbb{N} \) such that for any \( x \in G \), \( G^{j}(x) \neq 0 \) for at most \( M_0 \) values of \( j \). Therefore, we have that
\begin{align*}
    \smallint_G|G_1(x)|^2dx &\leq M_0 \sum_{j}\smallint_G|G^{j}(x)|^2dx =\sum_{j}\smallint_{I_j}|{b}_{j}\ast \phi_j\ast k_\theta(x)|^2dx\\
    &\leq \sum_{j}\Vert {b}_{j}\Vert_{L^1}^2\Vert \phi_j\ast k_\theta\Vert_{L^2}^2\leq \sum_{j}\alpha^2\gamma^2|I_j|^2\Vert \phi_j\ast k_\theta\Vert_{L^2}^2.
\end{align*} 
To estimate the $L^2$-norm $\Vert \phi_j\ast k_\theta\Vert_{L^2}^2$, we will use the dilation properties of the spectral calculus of the Rockland operator $\mathcal{R}$.
\subsection{Final Part: Control of the Bessel operator associated to the Rockland operator}

Now, let us apply the Plancherel theorem:
$$ \Vert \phi_j\ast k_\theta\Vert_{L^2(G)}^2=\Vert(1+\pi(\mathcal{R}))^{-\frac{Q\theta}{2\nu}}\widehat{\phi}_j\Vert^2_{L^2(\widehat{G})}. $$
Define for any $j,$ 
\begin{equation}\label{defi:psij:graded}
\psi_{j}(x):=R_j^{Q}\phi_{j}(R_j\cdot x),\,x\in G.    
\end{equation}
Observe that for any $j,$
\begin{align}\label{psijisphi}
\psi_{j}(x):=R_j^{Q}\phi_{j}(R_j\cdot x)=   R_j^{Q}R_j^{-Q}\phi(R_j\cdot R_j^{-1}\cdot x)=\phi(x). 
\end{align}However, in order to use the dilation properties of the Fourier transform of distributions, let us keep in mind the identity for \( \psi_j = \phi \) in \eqref{defi:psij:graded}. Also, let us remark that we have the following identities for the function \( \phi_j \):
\begin{equation*}
    \phi_j(x)=R_j^{-Q}\psi_j\left({R_j}^{-1}\cdot x\right),\quad\,x\in G.
\end{equation*} as well as for its Fourier transform (in view of \eqref{Eq:dilatedFourier})
\begin{align*}
    \widehat{\phi_j}( \pi)= \widehat{\psi_j}(R_j\cdot \pi),\,\pi\in \widehat{G}.
\end{align*}
Also, note that $\Vert \psi_j\Vert_{L^\infty}=1,$ and $\textnormal{supp}(\psi_j)\subset B(e,1).$  Using the Plancherel theorem, we have that
\begin{align*}
  \Vert \phi_j\ast k_\theta\Vert_{L^2(G)}^2&=\Vert(1+\pi(\mathcal{R}))^{-\frac{Q\theta}{2\nu}}\widehat{\phi}_j\Vert^2_{L^2(\widehat{G})} \\
  &=  \Vert (1+\pi(\mathcal{R}))^{-\frac{Q\theta}{2\nu}}\widehat{\psi_j}(R_j\cdot \pi)\Vert^2_{L^2(\widehat{G})} \\
  &= \smallint\limits_{\widehat{G}} \Vert(1+\pi(\mathcal{R}))^{-\frac{Q\theta}{2\nu}}\widehat{\psi_j}(R_j\cdot \pi)\Vert_{\textnormal{HS}}^2d\pi \\
  &={R_j}^{-Q} \smallint\limits_{\widehat{G}} \Vert(1+({R_j}^{-1}\cdot\pi)(\mathcal{R}))^{-\frac{Q\theta}{2\nu}}\widehat{\psi_j}( \pi)\Vert_{\textnormal{HS}}^2d\pi,
\end{align*}where we have used the rescaling property in Remark \ref{DilationLebesgue}. Let us write:
\begin{align*}
    ({R_j}^{-1}\cdot\pi)(\mathcal{R})=\widehat{\mathcal{R}\delta}({R_j}^{-1}\cdot\pi).
\end{align*}
Using  the functional calculus  for $\varkappa(t):=(1+t)^{-\frac{Q\theta}{2\nu}},$ we have that
\begin{align*}
\forall r>0,\,\forall x\in G,\,\,    \varkappa(r^{\nu}\mathcal{R})\delta(x)=r^{-Q}\varkappa(\mathcal{R})\delta(r^{-1}\cdot x),
\end{align*}that is equivalent to say that
\begin{align*}
\forall r>0,\,\forall x\in G,\,\,    (1+r^{\nu}\mathcal{R})^{-\frac{Q\theta}{2\nu}}\delta(x)=r^{-Q}(1+\mathcal{R})^{-\frac{Q\theta}{2\nu}}\delta(r^{-1}\cdot x).
\end{align*}Taking the Fourier transform in both sides of this equality, and using the functional calculus of Rockland operators, we obtain
\begin{align*}
   \forall r>0,\,\forall \pi\in \widehat{G},\,\,    (1+r^{\nu}\pi(\mathcal{R}))^{-\frac{Q\theta}{2\nu}}&=\mathscr{F}[r^{-Q}(1+\mathcal{R})^{-\frac{Q\theta}{2\nu}}\delta(r^{-1}\cdot )](\pi) \\
   &=\mathscr{F}[(1+\mathcal{R})^{-\frac{Q\theta}{2\nu}}\delta(\cdot )](r\cdot \pi).
\end{align*}
Thus, we have
\begin{equation}\label{power:identity}
   \forall r>0,\,\forall \pi\in \widehat{G},\,\,    (1+r^{\nu}\pi(\mathcal{R}))^{-\frac{Q\theta}{2\nu}}=\mathscr{F}[(1+\mathcal{R})^{-\frac{Q\theta}{2\nu}}\delta(\cdot )](r\cdot \pi).  
\end{equation}
By taking $r=R_j^{-1}$ in \eqref{power:identity} we have that
\begin{align*}
    \forall j,\,\forall \pi\in \widehat{G},\,\,    (1+R_j^{-\nu}\pi(\mathcal{R}))^{-\frac{Q\theta}{2\nu}}=\mathscr{F}[(1+\mathcal{R})^{-\frac{Q\theta}{2\nu}}\delta(\cdot )](R_{j}^{-1}\cdot \pi)=(1+(R_{j}^{-1}\cdot \pi)(\mathcal{R}))^{-\frac{Q\theta}{2\nu}}.
\end{align*}

Note that
\begin{align*}
 {R_j}^{-Q} \smallint\limits_{\widehat{G}} \Vert(1+({R_j}^{-1}\cdot\pi)(\mathcal{R}))^{-\frac{Q\theta}{2\nu}}\widehat{\psi_j}( \pi)\Vert_{\textnormal{HS}}^2d\pi&={R_j}^{-Q} \smallint\limits_{\widehat{G}} \Vert(1+R_j^{-\nu}\pi(\mathcal{R}))^{-\frac{Q\theta}{2\nu}}\widehat{\psi_j}( \pi)\Vert_{\textnormal{HS}}^2d\pi\\
 &={R_j}^{-Q}R_{j}^{Q\theta} \smallint\limits_{\widehat{G}} \Vert(R_{j}^{\nu}+\pi(\mathcal{R}))^{-\frac{Q\theta}{2\nu}}\widehat{\psi_j}( \pi)\Vert_{\textnormal{HS}}^2d\pi.
\end{align*}
Now, let us prove that for $a=R_j^{\nu}$ and $b=\frac{Q\theta}{2\nu},$ and $\tau(\pi):=\widehat{\psi_j}( \pi),$ we have the inequality:
\begin{equation}
     \Vert \pi(a+\mathcal{R})^{-b}\tau(\pi)\Vert_{\textnormal{HS}}^2\leq\Vert \pi(\mathcal{R})^{-b}\tau(\pi)\Vert_{\textnormal{HS}}^2.
\end{equation} Indeed, let us denote by  $\{dE_{\pi(\mathcal{R})}(\lambda)\}_{\lambda>0}$ the spectral measure  associated to the operator $\pi(\mathcal{R}).$ If $B_{\pi}=\{e_{\pi,k}\}_{k=1}^\infty$ is a basis of the representation space $H_\pi,$ then,
\begin{align*}
    \Vert \pi(a+\mathcal{R})^{-b}\tau(\pi)\Vert_{\textnormal{HS}}^2&=\sum_{k=1}^\infty\left\Vert  \pi(a+\mathcal{R})^{-b} \tau(\pi)e_{\pi,k} \right\Vert_{H_\pi}^2\\
    &= \sum_{k=1}^\infty\left\Vert \smallint\limits_{0}^\infty(a+\lambda)^{-b}dE_{\pi(\mathcal{R})}(\lambda)\tau(\pi)e_{\pi,k} \right\Vert_{H_\pi}^2\\
    &=\sum_{k=1}^\infty \smallint\limits_{0}^\infty(a+\lambda)^{-2b}d\Vert E_{\pi(\mathcal{R})}(\lambda)\tau(\pi)e_{\pi,k} \Vert_{H_\pi}^2\\
    &\leq\sum_{k=1}^\infty \smallint\limits_{0}^\infty\lambda^{-2b}d\Vert E_{\pi(\mathcal{R})}(\lambda)\tau(\pi)e_{\pi,k} \Vert_{H_\pi}^2\\
    &=\sum_{k=1}^\infty\left\Vert \smallint\limits_{0}^\infty\lambda^{-b}dE_{\pi(\mathcal{R})}(\lambda)\tau(\pi)e_{\pi,k} \right\Vert_{H_\pi}^2\\
    &=\sum_{k=1}^\infty\left\Vert  \pi(\mathcal{R})^{-b} \tau(\pi)e_{\pi,k} \right\Vert_{H_\pi}^2\\
    &=\Vert \pi(\mathcal{R})^{-b}\tau(\pi)\Vert_{\textnormal{HS}}^2,
\end{align*}as desired.
So, we can estimate
\begin{align*}
 {R_j}^{-Q} \smallint\limits_{\widehat{G}} \Vert(1+({R_j}^{-1}\cdot\pi)(\mathcal{R}))^{-\frac{Q\theta}{2\nu}}\widehat{\psi_j}( \pi)\Vert_{\textnormal{HS}}^2d\pi &\lesssim{R_j}^{-Q}R_{j}^{Q\theta} \smallint\limits_{\widehat{G}} \Vert\pi(\mathcal{R})^{-\frac{Q\theta}{2\nu}}\widehat{\psi_j}( \pi)\Vert_{\textnormal{HS}}^2d\pi\\
 &={R_j}^{-Q(1-\theta)} \smallint\limits_{\widehat{G}} \Vert\pi(\mathcal{R})^{-\frac{Q\theta}{2\nu}}\widehat{\psi_j}( \pi)\Vert_{\textnormal{HS}}^2d\pi\\
 &={R_j}^{-Q(1-\theta)}\Vert\mathcal{R}^{-\frac{Q\theta}{2\nu}}\psi_j\Vert^2_{L^2(G)}\\
 &={R_j}^{-Q(1-\theta)}\Vert\mathcal{R}^{-\frac{Q\theta}{2\nu}}\phi\Vert^2_{L^2(G)}\\
 &\lesssim|I_j|^{-1},
\end{align*}where we have used \eqref{psijisphi} and the estimate $|I_j|\sim R_{j}^{Q(1-\theta)}$ because $2R_j^{1-\theta}=\textnormal{diam}(I_j)$. Thus, we deduce that
\begin{align*}
    \smallint_G|G_1(x)|^2dx &\leq \sum_{j}\alpha^2\gamma^2|I_j|^2\Vert \phi_j\ast k_\theta\Vert_{L^2}^2\lesssim \sum_{j} \alpha^2\gamma^2|I_j|^2|I_j|^{-1}=\alpha^2\gamma^2\sum_{j}|I_j|\\
    &\lesssim \alpha\gamma\Vert f\Vert_{L^1}\lesssim_\gamma \alpha\Vert f\Vert_{L^1},
\end{align*} in view of \eqref{I:j}. Consequently, we can take $F_2:=G_2,$ $F_1:=G_1$ and  $F_{1}^{j}:=G_{1}^{j}.$ Thus, the proof of  Lemma \ref{lemma:Fefferman} is complete as well as the proof of Theorem  \ref{main:th}.
\end{proof}

\noindent {\bf{Conflict of interests statement.}} On behalf of all authors, the corresponding author states that there is no conflict of interest.\\

\noindent  {\bf{Declaration.}} We confirm that the order of authors listed in the manuscript has been approved by all named authors. We confirm that the manuscript has been read and approved by all named authors.\\

\noindent {\bf{Acknowledgements.}}
We would like to thank the referee for his/her valuable comments, which significantly helped to improve this article. In particular, we appreciate his/her suggestion to include an additional hypothesis in our theorem. The authors also thank Cody Stockdale for his comments on an earlier version of this work.

\bibliographystyle{amsplain}

\end{document}